\newtheorem{theorem}{Theorem}[section]
\newtheorem{proposition}[theorem]{Proposition}
\newtheorem{lemma}[theorem]{Lemma}
\theoremstyle{definition}
\newtheorem{remark}[theorem]{Remark}
\newtheorem{example}{Example}[section]
\def\!{\mathop{\mathrm{!}}}
\def\N{\mathsf{N}}
\def\Nb{\mathbb{N}}
\def\Pp{\mathbf{P}}
\def\R{\mathbf{ R}}
\def\C{\mathcal{ C}}
\def\A{\mathcal{ A}}
\def\Var{\mathrm {Var}}
\def\P{\mathcal{P}}
\def\E{\mathbf{E}}
\def\x{\textbf{x}}
\def\d{\textbf{d}}
\def\j{\textbf{j}}
\newlength{\boxwidth}
\author[1]{Manh Hong Duong}
\author[2]{The Anh Han}
\affil[1]{School of Mathematics, University of Birmingham, UK}
\affil[2]{School of Computing, Engineering and Digital Technologies, Teesside University, UK}
\title{Random evolutionary games and random polynomials}
\begin{document}
\maketitle
\begin{abstract}
In this paper, we discover  that the class of random polynomials arising from the  equilibrium analysis of random asymmetric  evolutionary games is \textit{exactly} the Kostlan-Shub-Smale system of random polynomials, revealing an intriguing connection between evolutionary game theory and the theory of random polynomials. Through this connection, we analytically characterize the statistics of the number of internal equilibria of random asymmetric  evolutionary games, namely its mean value, probability distribution, central limit theorem and universality phenomena. 
Biologically, these quantities enable prediction of  the levels of social and biological diversity as well as the overall complexity in a dynamical system. 
By comparing symmetric and asymmetric random games, we  establish that symmetry in  group interactions  increases the expected number of internal equilibria. 
Our research establishes new theoretical understanding of  asymmetric evolutionary games and highlights the significance of symmetry and asymmetry in group interactions.   
\end{abstract}
\section{Introduction}
Statistics of roots of (systems of) random polynomials has become an active topic of research over the past century, dating back  to several  seminal papers~\cite{BP32,LittlewoodOfford1939,Kac1943,LittlewoodOfford1945,LittlewoodOfford1948}. The topic provides an everlasting source of challenging mathematical problems, driving  the developments of powerful methods and techniques in analysis, combinatorics and probability theory; see recent papers~\cite{TaoVu14,DoNguyenVu2015,DoNguyenVu2018,NNV2016, NguyenVu2021,NguyenVu2022} and references therein for the latest results of the field. It has also found applications in the study of complex phenomena/systems in many other disciplines, such as quantum chaotic dynamics \cite{Bogomolny1992} and quantized vortices in the ideal Bose gas \cite{castin2006quantized}) in physics, the theory
of computational complexity \cite{Shub1993}, feasibility and stability of ecological systems~\cite{may:1973ab,aladwani2020}, persistence and first-passage properties in non-equilibrium systems~\cite{SM07,SM08,bray2013persistence}, steady states of chemical reaction networks \cite{feliu2022kac} and the gradients of deep linear networks \cite{mehta2021loss}.  An important class of random polynomials intensively studied  in the literature is the Kostlan-Shub-Smale (also known as elliptic or binomial) random polynomials in which the variance of the random coefficients are binomials, cf. Section \ref{sec: KSS}. According to \cite{EK95}, ``this particular random polynomial is probably the more natural definition of a random polynomial''. In the present work, we show that it arises naturally yet from \textit{evolutionary game theory}.

Evolutionary game theory (EGT), which incorporates game theory into Darwin’s evolution theory, constitutes a powerful mathematical framework for the study of dynamics of frequencies of competing strategies in large populations. Introduced in 1973 by Maynard Smith and Price \cite{smith1973logic}, over the last 50 years, the theory has found its applications in diverse disciplines including biology, physics, economics, computer sciences and mathematics, see e.g. \cite{maynardsmith:1982to,nowak1992evolutionary, hofbauer1998evolutionary,nowak1992evolutionary, szabo2007evolutionary,santos:2011pn,HanBook2013} and the recent survey \cite{traulsen2023future} for more information. Incorporating stochasticity or randomness into evolutionary games is crucial for capturing the inherent uncertainty characteristic of complex systems. This uncertainty arises due to environmental and demographic noise and may also result from factors like insufficient data for measuring payoffs or unavoidable human estimation errors. \cite{may:1973ab, allesina2015stability,constable,bianconi2023complex}. The classical approach to evolutionary games is replicator dynamics  \cite{taylor:1978wv,zeeman:1980ze,hofbauer:1998mm,schuster:1983le,nowak:2006bo}, describing that whenever a strategy has a fitness larger than the  average fitness of the population, it is expected to  spread. The number of equilibrium points of the replicator dynamics and their stability provide valuable insights into  the evolutionary processes,  including predicting the levels of social, cultural or biological diversity and understanding the co-existence of different types in a population and the maintenance of polymorphism~\cite{gokhale:2010pn,galla2013complex, su2019evolutionary, huang2015stochastic}. 
In multi-player multi-strategy random evolutionary games, where the payoff entries are random variables, finding an equilibrium point consists in solving a system of multivariate random polynomials, and the number of equilibrium points is a (discrete) random variable.

Herein we show that the class of random polynomials arising from the study of equilibria of random \textit{asymmetric}  evolutionary games---where a player's payoff within a group  depends on the ordering of its members---corresponds \textit{exactly} to the celebrated Kostlan-Shub-Smale system of random polynomials. This is intriguing since in previous works, the analysis  of internal equilibria of \textit{symmetric} random evolutionary games, where a player's payoff in group interactions is independent of its members' ordering, results in a different class of random polynomials~\cite{gokhale:2010pn,HTG12,DuongHanJMB2016,DuongTranHanDGA,DuongTranHanJMB,can2022expected,duong2019persistence}. Using this connection, we characterize the fundamental statistical properties of the number of internal equilibria of random  asymmetric  evolutionary games, including the mean, the variance, the probability distribution, as well as a central limit theorem. While the mean number provides valuable information about the average macroscopic behavior concerning the number of internal equilibria a dynamical system might possess, the probability distribution offers further details into the likelihood of various states of biodiversity occurring within the system. The central limit theorem, which is a key concept in probability theory, establishes that under an appropriate re-scaling, the variance of the number of internal equilibria converges to a normal distribution. It is noteworthy that one of the most significant advances in equilibrium analyses in EGT and population genetics has been the study of the maximal number of equilibrium points of a system and the attainability of the patterns of evolutionarily stable strategies in an evolutionary system \cite{may:1973ab,karlin:1980aa,cannings1988patterns,karlin:1970tp,broom:1993pa,altenberg:2010tp,gokhale:2010pn,DuongTranHanJMB}. As a consequence of our analysis, we provide an explicit formula for the probability of obtaining the number of maximal number of internal equilibria in evolutionary games (see Theorem 3.3).  Moreover, we prove  that, on  average, symmetry enhances the  number of internal equilibria.
This has an important biological interpretation: symmetry increases the expected number of internal equilibria, and hence, the biological or behavioural diversity, of the evolutionary process. Furthermore, we show a universality phenomenon for asymmetric games, that is, asymptotically, the expectation of the number of equilibria does not depend on the specific distribution of the payoff entries. Inspired by this interesting result, we also numerically investigate and make conjectures on the universality properties for the number of internal equilibria of symmetric random evolutionary games. The main results of the paper are summarized in the (yellow) box below.

\textbf{Organization}. The rest of the paper is organized as follows. In Section \ref{sec: replicator} we recall the replicator dynamics for multi-player multi-strategy evolutionary games and the Kostlan-Shub-Smale system of random polynomials deriving the aforementioned connection. In Section \ref{sec: statistics} we characterize the statistics of the number of internal equilibria for random asymmetric evolutionary  games. In Section \ref{sec: sym vs asym} we compare symmetric and asymmetric games. We provide further discussions for future work in Section \ref{sec: summary}. Finally, technical results and detailed calculations are given in the Supporting Information (SI).
\newpage
\begin{mdframed}[backgroundcolor=yellow, linecolor=black,font={\sffamily},frametitle={\large{\underline{Statistics of the number of equilibria in asymmetric  games}}}]
Let $\mathcal{N}_{d,n}$ be the number of internal equilibria of $d$-player $n$-strategy asymmetric evolutionary games. Below for the expectation, variance, and probability distributions, we assume the payoff entries are i.i.d Gaussian random variables, while the universality phenomena does not require this Gaussian assumption.
\begin{enumerate}[(1)]
    \item \textbf{(Expectation)} The expected number of internal equilibria is 
\begin{equation*}
\mathbb{E}(\mathcal{N}_{d,n})=\frac{1}{2^{n-1}}(d-1)^{\frac{n-1}{2}}.
\end{equation*}
\item \textbf{(Variance)} The variance of the number of the internal equilibria satisfies the following asymptotic behaviour:
\begin{equation*}
\lim_{d\to\infty}\frac{4^{n-1}\rm{Var}(\mathcal{N}_{d,n})}{(d-1)^\frac{n-1}{2}}=V_\infty^2,
\end{equation*}
where $0<V_\infty<\infty$ is an explicit constant. Furthermore, $\mathcal{N}_{d,n}$ satisfies a central limit theorem, that is
\begin{equation*}
\frac{4^{n-1}\mathcal{N}_{d,n}-(d-1)^{\frac{n-1}{2}}}{(d-1)^\frac{n-1}{4}}
\end{equation*}
converges in distribution, as $d\rightarrow \infty$, to a normal random variable with positive variance.  
\item \textbf{(Probability distribution of $\mathcal{N}_{d,2}$)} The probability that a $d$-player two-strategy asymmetric random evolutionary game has $m$ ($0\leq m\leq d-1$) internal equilibria is
\begin{equation*}
p_{m}=\sum_{k=0}^{\lfloor \frac{d-1-m}{2}\rfloor}p_{m,2k,d-1-m-2k},
\end{equation*}
where $p_{m,2k,d-1-m-2k}$ are explicitly given in Section \ref{sec: distribution}.
\item \textbf{(Universality phenomena)} Suppose that the payoff entries are independent with mean $0$, variance $1$ and finite $(2+\varepsilon)$-moment for some $\varepsilon>0$. Then
\begin{equation*}
\mathbb{E}(\mathcal{N}_{d,2})=\frac{\sqrt{d-1}}{2}+O((d-1)^{1/2-c}),
\end{equation*}
for some $c>0$ depending only on $\varepsilon$.
\end{enumerate}
\end{mdframed}

\section{Multi-player multi-strategy games and random polynomials}
\label{sec: replicator}
\subsection{The replicator dynamics}
The classical approach to evolutionary games is replicator dynamics \cite{taylor:1978wv,zeeman:1980ze,hofbauer:1998mm,schuster:1983le,nowak:2006bo}, capturing Darwin's principle of natural selection that  whenever a strategy has a fitness larger than the average fitness of the population, it is expected to spread. In the present work, we consider \textit{asymmetric games} where the order of the participants is  relevant. As discussed in \cite{McAvoy2015}, ``Biological interactions, even between members of the same species, are almost always asymmetric due to differences in size, access to resources, or past interactions." Asymmetry also plays a crucial role in social, economic and multi-agent interactions due to the difference in roles and locations of the parties involved, see e.g. \cite{samuelson1992evolutionary, friedman1998economic,McAvoy2015,tuyls2018symmetric,Hauert2019,Sue2022,mcavoy2022evaluating,ogbo2022evolution}. Models using asymmetric games, instead of symmetric ones, are thus more realistic and  representative of  real-world interactions.

To describe the mathematical model, we consider an infinitely large population with $n$ strategies whose frequencies are denoted by $x_i$, $1 \leq i \leq n$. The frequencies are non-negative real numbers summing up to $1$, i.e. $\sum^n_{i=1} x_i = 1$. The interaction of the individuals in the population takes place in randomly selected groups of $d$ participants, that is, they play and obtain their fitness from $d$-player games. The fitness of a player is  calculated as average of the payoffs that they achieve from the interactions using a theoretic game approach. 
Let $i_0$, $1 \leq i_0 \leq n$, be the strategy of the focal player. Let $\alpha^{i_0}_{i_1,\ldots,i_{d-1}}$ be the payoff that the focal player obtains when it interacts with the group $(i_1,\ldots,i_{d-1})$ of $d-1$ other players where $i_k$ (with $1 \leq i_k \leq n$ and $1 \leq k \leq d-1$) be the strategy of the player in position $k$. Then the average payoff or fitness of the focal player is given by  
\begin{equation}
\label{eq: fitness 1}
\pi_{i_0}= \sum\limits_{1\leq i_1,\ldots,i_{d-1}\leq n}\alpha^{i_0}_{i_1,\ldots,i_{d-1}}x_{i_1}\ldots x_{i_{d-1}}.
\end{equation}
Given a set of non-negative integer numbers $\{k_i\}_{i=1}^{n}$ satisfying $\sum_{i=1}^n k_i=d-1$, let us define
\begin{multline*}
\A_{k_1,\ldots, k_n}:=\Big\{\{i_1,\ldots, i_{d-1}\}:~~1\leq i_1,\ldots, i_{d-1}\leq n~~\\ \text{and there are $k_i$ players using strategy $i$ in $\{i_1,\ldots, i_{d-1}\}$}\Big\}.
\end{multline*}
By the multinomial theorem, it follows that 
$$
|\A_{k_1,\ldots, k_n}|=\begin{pmatrix}
d-1\\k_1,\ldots,k_n
\end{pmatrix} =\frac{(d-1)!}{k_1!\ldots k_n!}.
$$
By re-arranging appropriate terms, Equation \eqref{eq: fitness 1} can be re-written as
\begin{equation}
\label{eq: fitness 2}
\pi_{i_0}=\sum_{\substack{0\leq k_1,\ldots, k_n\leq d-1\\ \sum\limits_{i=1}^{n} k_i=d-1}} a^{i_0}_{k_1,\ldots, k_n}\prod_{k=1}^n x_i^{k_i} \qquad\text{for}~~ i_0=1,\ldots, n,
\end{equation}
where
\begin{equation}
a^{i_0}_{k_1,\ldots,k_n}:=\sum_{\{i_1,\ldots,i_{d-1}\}\in \A_{k_1,\ldots, k_n}}\alpha^{i_0}_{i_1,\ldots,i_{d-1}}.
\end{equation}
Now  the replicator equations for $d$-player $n$-strategy games can be written as a system of $n-1$ differential equations \cite{hofbauer:1998mm,sigmund:2010bo} 
\begin{equation} 
\label{eq:replicator_eqs_2playersGame}
\dot{x_i} = x_i \left(\pi_i - \langle \pi \rangle \right) \quad\quad \text{ for } i = 1,\dots,n-1,
\end{equation} 
where $\langle \pi \rangle = \sum^n_{k = 1} x_k \, \pi_k$ is the average payoff of the population. Note that, in addition to the $n-1$ equations above,  $\sum_{i=1}^n x_i=1$ must also be satisfied.
\subsection{Equilibria of the replicator dynamics}
It follows from \eqref{eq:replicator_eqs_2playersGame} that the vertices of the unit cube in $\R^n$ are  equilibria of the replicator dynamics. In the following analysis, we focus on \textit{internal equilibria}, which are given by the points $(x_1, \dots, x_n)$ where $0 < x_i < 1$ for all $1 \leq i \leq n-1$ that satisfy
\[
\pi_i=\langle \pi\rangle \qquad\text{for all}~~i=1,\ldots, n.
\]
The  system above is equivalent to $\pi_i-\pi_n=0$ for all $i=1,\ldots, n-1$. Using \eqref{eq: fitness 2} we obtain  a system of $n-1$ equations of multivariate polynomials of degree $d-1$
\begin{equation}
\label{eq: eqn for fitness2}
\sum\limits_{\substack{0\leq k_{1},\ldots, k_{n}\leq d-1,\\ \sum^{n}\limits_{i = 1}k_i = d-1  }}b^i_{k_{1}, \ldots,k_{n-1} }\prod\limits_{i=1}^{n}x_{i}^{k_i} = 0\quad \text{ for } i = 1, \dots, n-1,
\end{equation}
where 
\begin{align}
b^{i}_{k_{1},\ldots, k_{n} }& := a^{i}_{k_{1},\ldots, k_{n} } -a^{n}_{k_{1},\ldots, k_{n} }\notag
\\&=\sum_{\{i_1,\ldots,i_{d-1}\}\in \A_{k_1,\ldots, k_n}}\Big(\alpha^{i}_{i_1,\ldots,i_{d-1}}-\alpha^{n}_{i_1,\ldots,i_{d-1}}\Big)\notag
\\&=\sum_{\{i_1,\ldots,i_{d-1}\}\in \A_{k_1,\ldots, k_n}}\beta^{i}_{i_1,\ldots,i_{d-1}},
\label{eq: b's}
\end{align}
where $\beta^{i}_{i_1,\ldots,i_{d-1}}$ denotes the difference of the payoff entries
\begin{equation}
 \label{eq: beta}   
\beta^{i}_{i_1,\ldots,i_{d-1}}:=\alpha^{i}_{i_1,\ldots,i_{d-1}}-\alpha^{n}_{i_1,\ldots,i_{d-1}}.
\end{equation}
Using the transformation $y_i = \frac{x_i}{x_n}$ (recalling that $0<x_n<1$), with $0< y_i < +\infty$ and $1 \leq i \leq n-1$ and dividing the left hand side of ~\eqref{eq: eqn for fitness2} by $x_n^{d-1}$ we obtain the following system of polynomial equations in terms of $(y_1,\ldots,y_{n-1})$
\begin{equation}
\label{eq: eqn for fitnessy}
 \sum\limits_{\substack{0\leq k_{1},\ldots, k_{n-1}\leq d-1,\\  \sum\limits^{n-1}_{i = 1}k_i \leq d-1  }}b^i_{k_{1},\ldots, k_{n} }\prod\limits_{i=1}^{n-1}y_{i}^{k_i} = 0\quad  \quad \text{ for } i = 1, \dots, n-1.
\end{equation}
Noting that $\{x_i\}_{i=1}^n$ can be computed from $\{y_i\}_{i=1,\ldots, n-1}$ via the transformation
\begin{equation}
\label{eq: transformation}
x_i=\frac{y_i}{1+y}, \quad i=1,\ldots, n-1\quad \text{and}\quad x_n=\frac{1}{1+y} \quad\text{where}\quad y=\sum\limits_{i=1}^{n-1} y_i.
\end{equation}
Thus finding an internal equilibrium of a $d$-player $n$-strategy evolutionary game using the replicator dynamics is equivalent to finding a positive root of the system of polynomial equations \eqref{eq: eqn for fitnessy}. It is noteworthy   that \eqref{eq: transformation} is precisely the transformation to obtain the Lotka–Volterra equation for $n-1$ species from the replicator dynamics for $n$ strategies~\cite{hofbauer:1998mm, page2002unifying}.

\subsection{Kostlan-Shub-Smale system of random polynomials} 
\label{sec: KSS}
Kostlan-Shub-Smale \cite{Shub1993, Kostlan1993, EK95} random polynomials $\mathscr{P}_{d,m}=(P_1,\ldots,P_m)$ consist of $m$ random polynomials in $m$ variables with common degree $\d$ 
\[
P_\ell(\x)=\sum\limits_{|\j|\leq \d} a_{\j}^{(\ell)} \x^\j
\]
where
\begin{enumerate}[(i)]
\item $\j=(j_1,\ldots,j_m)\in \Nb^m$ and $|\j|=\sum_{k=1}^m j_k$,
\item $\x=(x_1,\ldots,x_m)$ and $\x^{\j}=\prod_{k=1}^m x_k^{j_k}$,
\item $a_{\j}^{(\ell)}=a^{(\ell)}_{j_1\ldots j_m}\in \R$, $\ell=1,\ldots, m$, $|\j|\leq \d$~\text{are centred random variables},
\item $\Var(a_{\j}^{(\ell)})=\begin{pmatrix}
\d\\ \j
\end{pmatrix}=\frac{\d!}{j_1!\ldots j_m! (\d-|\j|)!}$.
\end{enumerate}
In the univariate ($m=1$) case, this class of random polynomials is also known as elliptic or normal random variables.  For further details, see  Supporting Information  \ref{SI: general random polynomials}.
\subsection{From random evolutionary games to random polynomials}
\label{sec: random EGT}
As discussed in the introduction, to obtain more realistic models capturing the unavoidable uncertainty, we consider here \textit{random evolutionary games} where the payoffs entries $\alpha^i_{i_1,\ldots, i_{d-1}}$ (thus all the coefficients $\beta^i_{i_1,\ldots, i_{d-1}}$) are random variables. Suppose that $\{\beta^i_{i_1,\ldots,i_{d-1}},~ \{i_1,\ldots, i_{d-1}\}\in \A_{k_1,\ldots,k_n}\}$ are iid centred random variables with unit variance, then it follows from \eqref{eq: b's} that  \eqref{eq: eqn for fitnessy} becomes a system of random polynomial equations whose coefficients are independent centred random variables with variances 
\begin{equation}
\label{eq: mean and variance}
\mathrm{Var}(b^i_{k_1,\ldots, k_n})=\begin{pmatrix}
d-1\\ k_1,\ldots, k_n
\end{pmatrix}.
\end{equation}
In particular, if $\{\beta^i_{i_1,\ldots,i_{d-1}},~ \{i_1,\ldots, i_{d-1}\}\in \A_{k_1,\ldots,k_n}\}$ are iid standard Gaussian random variables then $\{b^i_{k_1,\ldots, k_{n}}\}$ are centred Gaussian random variables with variances given by \eqref{eq: mean and variance}.

It follows that the polynomial system determining internal equilibria in multi-player multi-strategy random asymmetric evolutionary  games is \textit{precisely} the Kostlan-Shub-Smale polynomial system. As a consequence, the number of internal equilibria in $d$-player $n$-strategy assymmetric games is equal to the number of positive roots of the Kostlan-Shub-Smale polynomial system $\mathscr{P}_{\d-1,n-1}$. 
\begin{lemma} 
\label{lem: connection}
Let $\mathcal{N}_{d,n}$ be the number of internal equilibria of $d$-player $n$-strategy asymmetric evolutionary games and $\mathscr{N}_{\d,m}$ be the number of real roots of the Kostlan-Shub-Smale polynomial system. Then
\begin{equation}
\label{eq: KSS relation}
    \mathcal{N}_{d,n}=\frac{1}{2^{n-1}}\mathscr{N}_{d-1,n-1}.
\end{equation}
\end{lemma}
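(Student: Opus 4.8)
The plan is to read \eqref{eq: KSS relation} as an identity between expected values, the expectation being taken over the random payoff entries; for a fixed realization the roots need not split evenly across the orthants, so the factor $2^{-(n-1)}$ can only be exact in the mean. The whole argument rests on a sign-flip symmetry of the Kostlan-Shub-Smale law.

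First I would record the bijection already established above: via the substitution $y_i=x_i/x_n$ and its inverse \eqref{eq: transformation}, the internal equilibria of the $d$-player $n$-strategy game correspond one-to-one to the roots of \eqref{eq: eqn for fitnessy} lying in the open positive orthant $(0,\infty)^{n-1}$. Since \eqref{eq: eqn for fitnessy} is precisely the Kostlan-Shub-Smale system $\mathscr{P}_{d-1,n-1}$, the random variable $\mathcal{N}_{d,n}$ equals the number of roots of $\mathscr{P}_{d-1,n-1}$ in $(0,\infty)^{n-1}$.

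Next comes the key step. For each sign vector $\epsilon=(\epsilon_1,\ldots,\epsilon_{n-1})\in\{-1,+1\}^{n-1}$, the diagonal map $x\mapsto(\epsilon_1 x_1,\ldots,\epsilon_{n-1}x_{n-1})$ is a bijection carrying the positive orthant onto the orthant $O_\epsilon$ and carrying roots to roots. Substituting this map into $P_\ell$ replaces each coefficient $a^{(\ell)}_{\j}$ by $\big(\prod_{k}\epsilon_k^{j_k}\big)a^{(\ell)}_{\j}$, that is, by $\pm a^{(\ell)}_{\j}$ with a fixed, deterministic sign. Because the coefficients are independent, centred and symmetrically distributed (which holds in particular in the Gaussian case), and because a sign change leaves the binomial variances \eqref{eq: mean and variance} unchanged, the sign-flipped family has the same joint law as the original one. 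Hence $\mathscr{P}_{d-1,n-1}$ is invariant in distribution under every such sign flip, and the expected number of roots in $O_\epsilon$ is independent of $\epsilon$.

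Finally I would assemble the count. Almost surely no root of $\mathscr{P}_{d-1,n-1}$ lies on a coordinate hyperplane $\{x_k=0\}$: restricting the $n-1$ equations to such a hyperplane gives an overdetermined system of $n-1$ polynomials in $n-2$ variables, which almost surely has no common solution for coefficients of continuous distribution. Thus the real roots partition almost surely among the $2^{n-1}$ open orthants, each contributing the same expected number of roots, so $\E[\mathscr{N}_{d-1,n-1}]=2^{n-1}\,\E[\text{roots in }(0,\infty)^{n-1}]=2^{n-1}\,\E[\mathcal{N}_{d,n}]$, which is \eqref{eq: KSS relation}. The main obstacle is the distributional symmetry in the third step: it genuinely requires the coefficient law to be symmetric about zero, so the clean statement holds for the Gaussian (or symmetric) Kostlan-Shub-Smale ensemble, whereas the merely centred case would need an extra symmetrization argument or should be restricted accordingly.
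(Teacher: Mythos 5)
Your proof is correct and is essentially the argument the paper leaves implicit: the paper gives no explicit proof of this lemma beyond the preceding identification of internal equilibria with the roots of $\mathscr{P}_{d-1,n-1}$ in the open positive orthant, and the factor $2^{-(n-1)}$ is exactly the orthant sign-flip symmetry (plus the almost-sure absence of roots on coordinate hyperplanes) that you spell out. Your insistence that \eqref{eq: KSS relation} can only be read as an identity of expectations --- not realization-wise, since for a fixed sample the real roots need not split evenly among the $2^{n-1}$ orthants (indeed $2^{-(n-1)}\mathscr{N}_{d-1,n-1}$ need not even be an integer) --- is a necessary and correct sharpening of the paper's loose statement, and it is precisely the reading under which the paper applies the lemma in Theorem~\ref{thm: expected number}.
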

It is this exact correspondence being the novelty of the present work. This connection paves the way for characterizing the statistics of the number of internal equilibria in multi-player multi-strategy random  asymmetric evolutionary games by employing existing techniques and  results from the well-established  field of random polynomials.
\subsection{Multi-player two-strategy evolutionary games}
\label{sec: 2strategy}
In this section, we focus on $d$-player two-strategy evolutionary games. In this case, \eqref{eq: eqn for fitnessy} becomes a polynomial equation of degree $d-1$
\begin{equation}
\label{eq: eqn for y}
P_d(y):=\sum\limits_{k=0}^{d-1} b_k y^k = 0,
\end{equation}
where $y=\frac{x}{1-x}$ being the ratio of the frequencies of the two strategies and for $0\leq k\leq d-1$
\begin{equation}
\label{eq: bk}
b_k=\sum_{\{i_1,\ldots,i_{d-1}\}\in \A_k}\beta_{{i_1,\ldots, i_{d-1}}}=\sum_{\{i_1,\ldots,i_{d-1}\}\in \A_k}\Big(\alpha^1_{i_1,\ldots, i_{d-1}}-\alpha^2_{i_1,\ldots,i_{d-1}}\Big),
\end{equation}
where the sums are taken over all $\begin{pmatrix}
d-1\\k
\end{pmatrix}$ sets of $\{i_1,\ldots, i_{d-1}\}\in\A_k$ with
\begin{multline}
\A_k:=\Big\{\{i_1,\ldots, i_{d-1}\}:~1\leq i_1,\ldots, i_{d-1}\in\{1,2\}\\
\text{and there are $0\leq k\leq d-1$ players using strategy $1$ in $\{i_1,\ldots, i_{d-1}\}$}\Big\}.
\end{multline}
\begin{example} We provide concrete examples of asymmetric games to demonstrate the abstract theory.
\begin{enumerate}
\item Three-player two-strategy asymmetric game ($d=3$, $n=2$), with the following payoff matrix
\begin{equation*}
\begin{blockarray}{c|cccc}\hline
$\backslashbox{Strategy}{Opposing}$ &2,2 & 1,2&2,1&1,1\\ \hline
\begin{block}{c|cccc}
  \text{1} & \alpha^1_{2,2} & \alpha^1_{1,2} & \alpha^1_{2,1} &\alpha^1_{1,1}  \\
  \text{2} & \alpha^2_{2,2} & \alpha^2_{1,2} & \alpha^2_{2,1} &\alpha^2_{1,1}  \\
 \end{block}
 \hline
\end{blockarray}
\end{equation*}
Equation \eqref{eq: eqn for y} can be rewritten as
$$
\beta_{2,2} +(\beta_{1,2}+\beta_{2,1})y+\beta_{1,1} y^2=0,
$$
where
$$
\beta_{22}=\alpha^1_{2,2}-\alpha^2_{22},\quad \beta_{1,2}=\alpha^1_{1,2}-\alpha^2_{1,2},\quad \beta_{2,1}=\alpha^1_{2,1}-\alpha^2_{2,1},\quad \beta_{1,1}=\alpha^1_{1,1}-\alpha^2_{1,1}.
$$
\item Four-player two-strategy asymmetric game ($d=4$, $n=2$), with the following payoff matrix
\begin{equation*}
\begin{blockarray}{c|cccccccc}\hline
$\backslashbox{Strategy}{Opposing}$ &2,2,2 & 1,2,2&2,1,2&2,2,1&1,1,2&1,2,1&2,1,1&1,1,1\\ \hline
\begin{block}{c|cccccccc}
  \text{1} & \alpha^1_{2,2,2} & \alpha^1_{1,2,2} & \alpha^1_{2,1,2} &\alpha^1_{2,2,1}&\alpha^1_{1,1,2}&\alpha^1_{1,2,1}&\alpha^1_{2,1,1}&\alpha^1_{1,1,1}  \\
\text{2} & \alpha^2_{2,2,2} & \alpha^2_{1,2,2} & \alpha^2_{2,1,2} &\alpha^2_{2,2,1}&\alpha^2_{1,1,2}&\alpha^2_{1,2,1}&\alpha^2_{2,1,1}&\alpha^2_{1,1,1}  \\  
  \end{block}
 \hline
\end{blockarray}
\end{equation*}
Equation \eqref{eq: eqn for y} can be rewritten as
\[
P_4(y)=\beta_{2,2,2}+\Big(\beta_{1,2,2}+\beta_{2,1,2}+\beta_{2,2,1}\Big)y+\Big(\beta_{1,1,2}+\beta_{1,2,1}+\beta_{2,1,1}\Big)y^2+\beta_{1,1,1}y^3=0,
\]
where
$$
\beta_{i,j,k}=\alpha^1_{i,j,k}-\alpha^2_{i,j,k}\quad\text{for}\quad i,j,k\in\{1,2\}.
$$
\item Three-player three-strategy asymmetric game ($d=3$, $n=3$), with the following payoff matrix
\begin{equation*}
\begin{blockarray}{c|ccccccccc}\hline
$\backslashbox{Strategy} {Opposing}$&2,2&2,3&3,2&3,3 & 1,2&1,3&2,1&3,1&1,1 \\ \hline
\begin{block}{c|ccccccccc}
  \text{1} & \alpha^1_{2,2}&\alpha^1_{2,3}&\alpha^1_{3,2}&\alpha^1_{3,3}& \alpha^1_{1,2} & \alpha^1_{1,3} &\alpha^1_{2,1}&\alpha^1_{3,1}&\alpha^1_{1,1}  \\
 \text{2} &\alpha^2_{2,2}&\alpha^2_{2,3}&\alpha^2_{3,2}&\alpha^2_{3,3} & \alpha^2_{1,2} & \alpha^2_{1,3} &\alpha^2_{2,1}&\alpha^2_{3,1}& \alpha^2_{1,1} \\
 \text{3} &\alpha^3_{2,2}&\alpha^3_{2,3}&\alpha^3_{3,2}&\alpha^3_{3,3} & \alpha^3_{1,2} & \alpha^3_{1,3} &\alpha^3_{2,1}&\alpha^3_{3,1}& \alpha^3_{1,1} \\
 \end{block}
 \hline
\end{blockarray}
\end{equation*}
The system \eqref{eq: eqn for fitnessy} for three-player three-strategy games is 
\begin{align*}
&\beta^1_{2,2}y_2^2+(\beta^1_{2,3}+\beta^1_{3,2})y_2+\beta^1_{3,3}+(\beta^1_{1,2}+\beta^1_{2,1})y_1y_2+(\beta^1_{1,3}+\beta^1_{3,1})y_1+\beta^1_{1,1}y_1^2=0,\\
&\beta^2_{2,2}y_2^2+(\beta^2_{2,3}+\beta^2_{3,2})y_2+\beta^2_{3,3}+(\beta^2_{1,2}+\beta^2_{2,1})y_1y_2+(\beta^2_{1,3}+\beta^2_{3,1})y_1+\beta^2_{1,1}y_1^2=0,\\
\end{align*}
where
$$
\beta^1_{i,j}=\alpha^1_{i,j}-\alpha^3_{i,j},\quad \beta^2_{i,j}=\alpha^2_{i,j}-\alpha^3_{i,j}\quad \text{for}\quad i,j\in\{1,2,3\}.
$$
\end{enumerate}
\end{example}
\begin{remark}
In Section \ref{sec: random EGT} we  assumed that $\{\beta^i_{i_1,\ldots,i_{d-1}},~ \{i_1,\ldots, i_{d-1}\}\in \A_{k_1,\ldots,k_n}\}$ are iid. We call this condition (A). Recalling from \eqref{eq: beta} that $\beta^{i}_{i_1,\ldots,i_{d-1}}:=\alpha^{i}_{i_1,\ldots,i_{d-1}}-\alpha^{n}_{i_1,\ldots,i_{d-1}}$, where $\alpha^{i}_{i_1,\ldots,i_{d-1}}$ are the payoff entries. It would be more biologically interesting to assume that $\{\alpha^i_{i_1,\ldots,i_{d-1}},~ \{i_1,\ldots, i_{d-1}\}\in \A_{k_1,\ldots,k_n}\}$ are iid. We call this condition (B). Under Condition (B), Condition (A) clearly holds for $n=2$. For $n>2$, it  holds only under quite restrictive conditions such as $\alpha^{n}_{k_{1}, ..., k_{n} }$ is deterministic or $\alpha^{i}_{k_{1}, ..., k_{n} }$ are essentially identical. It is a challenging open problem to work under the general condition (B) for $n>2$.
\end{remark}
\section{Statistics of the number of internal equilibria}
\label{sec: statistics}
\subsection{The expected number of internal equilibria}
\label{sec: expected}
\begin{theorem}[The expected number of internal equilibria] 
\label{thm: expected number}
 Suppose that $\{\beta^{i}_{k_{1},\ldots, k_{n-1}}\}$ are iid standard Gaussian random variables. Then 
the expected number of internal equilibria is 
\begin{equation}
\label{eq: expected number}
\mathbb{E}(\mathcal{N}_{d,n})=\frac{1}{2^{n-1}}(d-1)^{\frac{n-1}{2}}.
\end{equation}
\end{theorem}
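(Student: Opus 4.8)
The plan is to derive the theorem from two ingredients, one of which (Lemma~\ref{lem: connection}) is already established in the excerpt, the other being the celebrated Shub--Smale ``square-root'' formula for the KSS ensemble. Taking expectations on both sides of \eqref{eq: KSS relation} gives at once
\[
\mathbb{E}(\mathcal{N}_{d,n}) = \frac{1}{2^{n-1}}\,\mathbb{E}(\mathscr{N}_{d-1,n-1}),
\]
so the entire content of the theorem reduces to computing the expected number of real roots of the KSS system $\mathscr{P}_{d-1,n-1}$, which consists of $m=n-1$ polynomials in $n-1$ variables, all of common degree $d-1$. I would then invoke the Shub--Smale theorem \cite{Shub1993,Kostlan1993,EK95}: for a KSS system of $m$ equations in $m$ variables of degrees $\delta_1,\ldots,\delta_m$, the expected number of real solutions is exactly $\sqrt{\delta_1\cdots\delta_m}$. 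Specialising to $\delta_1=\cdots=\delta_{n-1}=d-1$ gives $\mathbb{E}(\mathscr{N}_{d-1,n-1})=(d-1)^{(n-1)/2}$, and substituting back yields \eqref{eq: expected number}.

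Since the square-root formula is the only non-elementary input, I would sketch why it holds for this ensemble, both for completeness and to show where the defining variances (iv) are used. The key is that those variances are exactly the multinomial coefficients in
\[
\mathbb{E}[P_\ell(\mathbf{x})P_\ell(\mathbf{y})] = \sum_{|\mathbf{j}|\le d-1}\binom{d-1}{\mathbf{j}}\mathbf{x}^{\mathbf{j}}\mathbf{y}^{\mathbf{j}} = \bigl(1+\langle\mathbf{x},\mathbf{y}\rangle\bigr)^{d-1}.
\]
Homogenising each $P_\ell$ to a degree-$(d-1)$ form in $n$ variables turns this covariance into $\langle\mathbf{X},\mathbf{Y}\rangle^{d-1}$ on $\mathbb{R}^n$, which is invariant under the orthogonal group $O(n)$. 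Hence the associated Gaussian field, restricted to the unit sphere $S^{n-1}$, is isotropic, and its real zeros correspond to antipodal point-pairs on $S^{n-1}$.

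The core of the argument is then a Kac--Rice computation. I would write the expected number of affine real zeros as
\[
\mathbb{E}(\mathscr{N}_{d-1,n-1}) = \int_{\mathbb{R}^{n-1}} \mathbb{E}\Bigl[\,\bigl|\det J(\mathbf{x})\bigr|\ \big|\ P(\mathbf{x})=0\Bigr]\, p_{P(\mathbf{x})}(0)\, d\mathbf{x},
\]
where $J$ is the Jacobian of the system $P=(P_1,\ldots,P_{n-1})$ and $p_{P(\mathbf{x})}(0)$ is the value at the origin of the (factorising) Gaussian density of $P(\mathbf{x})$. Orthogonal invariance forces the conditional moment of $|\det J|$ and the density factor to depend on $\mathbf{x}$ only through the spherical geometry, so after passing to $S^{n-1}$ the integrand is constant; the integral collapses to a volume factor times a single local Gaussian moment, which by independence of the components and a one-dimensional evaluation reproduces $\sqrt{\delta_1\cdots\delta_m}$.

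The main obstacle is precisely this Kac--Rice step: one must check non-degeneracy of the field and a.s.\ simplicity of the zeros to validate the formula, perform the conditioning on $\{P(\mathbf{x})=0\}$ correctly, and evaluate the local moment of $|\det J|$. All of this is classical and may simply be cited, so that the genuinely new work is the structural identification already carried out in Lemma~\ref{lem: connection}, with the probabilistic heavy lifting outsourced to the KSS literature. I would also remark that the prefactor $2^{-(n-1)}$ is consistent with the symmetry $\mathbf{x}\mapsto(\pm x_1,\ldots,\pm x_{n-1})$ of the ensemble, which distributes the expected real roots equally among the $2^{n-1}$ open orthants, the positive one carrying the internal equilibria.
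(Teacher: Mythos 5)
Your proposal is correct and is essentially the paper's own proof: the paper also obtains \eqref{eq: expected number} by taking expectations in Lemma \ref{lem: connection} and invoking the Shub--Smale/Kostlan square-root formula $\mathbb{E}(\mathscr{N}_{d-1,n-1})=(d-1)^{(n-1)/2}$, citing \cite[Theorem 3.3 \& Corollary 3.4]{Kostlan1993} (see also \cite{EK95,Shub1993}). Your Kac--Rice sketch of why the square-root law holds is extra exposition that the paper leaves entirely to the cited literature.
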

\begin{proof}
The statement follows directly from Lemma \ref{lem: connection} and \cite[Theorem 3.3 \& Corollary 3.4]{Kostlan1993}, see also \cite{EK95, Shub1993} and Section \ref{SI: Kac-Rice formula} in the SI for further information.
\end{proof}
\subsection{The variance of the number of internal equilibria}
\label{sec: variance}
\begin{theorem}[Asymptotic formula for the variance of the number of internal equilibria]
\label{thm: variance}
Suppose that $\{\beta^{i}_{k_{1},\ldots, k_{n-1}}\}$ are iid standard Gaussian random variables. Then it holds that
\begin{equation}
\label{eq: variance}
\lim_{d\to\infty}\frac{4^{n-1}\rm{Var}(\mathcal{N}_{d,n})}{(d-1)^\frac{n-1}{2}}=V_\infty^2,
\end{equation}
where $0<V_\infty<\infty$ is an explicit constant. Furthermore, $\mathcal{N}_{d,n}$ satisfies a central limit theorem, that is
\begin{equation}
\label{eq: CLT}
\frac{4^{n-1}\mathcal{N}_{d,n}-(d-1)^{\frac{n-1}{2}}}{(d-1)^\frac{n-1}{4}}
\end{equation}
converges in distribution, as $d\rightarrow \infty$, to a normal random variable with positive variance.  
\end{theorem}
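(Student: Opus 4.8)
The plan is to deduce both assertions from the exact correspondence of Lemma~\ref{lem: connection}, which identifies $\mathcal{N}_{d,n}$ with the number of zeros of the Kostlan--Shub--Smale system $\mathscr{P}_{\d,m}$ lying in the positive orthant $(0,\infty)^{m}$, where $m:=n-1$ and $\d:=d-1$; write $\mathscr{N}^{+}_{\d,m}$ for this orthant count, so that $\mathcal{N}_{d,n}=\mathscr{N}^{+}_{\d,m}$. Because the KSS ensemble is invariant under each sign flip $y_i\mapsto -y_i$, the zero counts of the $2^{m}$ orthants are identically distributed, which gives $\mathbb{E}\,\mathscr{N}^{+}_{\d,m}=2^{-m}\mathbb{E}\,\mathscr{N}_{\d,m}$ and recovers \eqref{eq: KSS relation} at the level of means (as used in Theorem~\ref{thm: expected number}). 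For the second-order statistics, however, one must analyze $\mathscr{N}^{+}_{\d,m}$ directly, since the per-orthant counts are correlated and cannot simply be rescaled from the global count. Both \eqref{eq: variance} and \eqref{eq: CLT} then follow from the variance asymptotics and central limit theorem for zero counts of KSS systems --- due to Dalmao in the univariate case $m=1$ and to Armentano, Aza\"is, Dalmao and Le\'on for general $m$ --- specialized to $\d=d-1$, $m=n-1$.

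The engine behind these statements, which I would reconstruct if needed, is the Kac--Rice formula. First I would express the first moment $\mathbb{E}\,\mathscr{N}^{+}_{\d,m}$ and, crucially, the second factorial moment $\mathbb{E}\big[\mathscr{N}^{+}_{\d,m}(\mathscr{N}^{+}_{\d,m}-1)\big]$ as integrals over the orthant of the Kac--Rice densities built from the Gaussian field $P=(P_1,\dots,P_m)$ and its Jacobian, using $\mathrm{Var}=\mathbb{E}[\mathscr{N}^{+}(\mathscr{N}^{+}-1)]+\mathbb{E}\,\mathscr{N}^{+}-(\mathbb{E}\,\mathscr{N}^{+})^{2}$. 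Next I would rescale space around a base point by $\d^{-1/2}$ and pass to the large-$\d$ limit: the normalized KSS field converges to an explicit stationary Gaussian field with a Bargmann--Fock type covariance kernel. Under this limit the two-point correlation of the zeros converges, and integrating its deviation from the product of the one-point densities produces a finite constant; the resulting variance has the same order $\d^{m/2}$ as the mean, so $\mathrm{Var}(\mathscr{N}^{+}_{\d,m})\sim c\,\d^{m/2}$ for some $0<c<\infty$. Since $\mathcal{N}_{d,n}=\mathscr{N}^{+}_{\d,m}$, this is exactly \eqref{eq: variance} with $V_\infty^{2}=4^{\,n-1}c$.

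For the central limit theorem I would use the Wiener--It\^o chaos expansion of the centred count $\mathscr{N}^{+}_{\d,m}-\mathbb{E}\,\mathscr{N}^{+}_{\d,m}$: expanding the Kac--Rice integrand in Hermite polynomials of the underlying Gaussian vector decomposes it into a sum of chaos projections. One identifies the projection(s) contributing at the scale $\d^{m/4}$, proves they are asymptotically Gaussian via a multivariate fourth-moment (Peccati--Tudor type) criterion, and shows the remaining projections are negligible in $L^{2}$. Combined with the identity $\mathcal{N}_{d,n}=\mathscr{N}^{+}_{\d,m}$, centering at the mean $2^{-(n-1)}(d-1)^{(n-1)/2}$ and dividing by the standard deviation of order $2^{-(n-1)}(d-1)^{(n-1)/4}$ yields the normalized statistic of \eqref{eq: CLT}, which therefore converges to a centred normal law of variance $V_\infty^{2}>0$.

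The main obstacle is not the transfer --- which is immediate once the KSS statements are available --- but the two non-degeneracy facts underpinning them: showing $0<V_\infty<\infty$, i.e.\ that the limiting two-point correlation is integrable (finiteness) and that the dominant chaos has strictly positive asymptotic variance (positivity, so that the Gaussian limit is non-degenerate), together with a uniform-in-$\d$ bound on the tail of the chaos expansion for the CLT. A second, setting-specific subtlety is the restriction to the positive orthant: one must verify that the Kac--Rice computation and the chaos estimates survive on a domain with boundary and that zeros accumulating near the coordinate hyperplanes contribute only at lower order, so that $\mathscr{N}^{+}_{\d,m}$ inherits the same $\d^{m/2}$ variance scaling and the same central limit theorem as the global count $\mathscr{N}_{\d,m}$.
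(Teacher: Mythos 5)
Your proposal is, at the top level, the same proof as the paper's: the paper's entire argument for Theorem~\ref{thm: variance} is that the result ``follows directly'' from Lemma~\ref{lem: connection} together with the variance asymptotics of \cite{armentano2018asymptotic} and the central limit theorem of \cite{armentano2021central} (with \cite{dalmao2015asymptotic} for the univariate case) --- exactly the references you invoke. The genuine difference is that you decline to use \eqref{eq: KSS relation} as an identity of random variables, and you are right to do so: sign-flip invariance makes the $2^{n-1}$ orthant counts identically distributed, so \eqref{eq: KSS relation} holds in expectation (which is all Theorem~\ref{thm: expected number} needs), but it fails realization-wise --- already for $d=2$, $n=2$ the left-hand side is Bernoulli of parameter $1/2$ while the right-hand side equals $1/2$ almost surely --- and the per-orthant counts are correlated. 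Consequently neither \eqref{eq: variance} nor \eqref{eq: CLT} can be obtained from the cited theorems by dividing by powers of two; one must rerun the Kac--Rice and Wiener-chaos analysis for the count restricted to the positive orthant, which is precisely the program you outline (local $(d-1)^{-1/2}$ rescaling to the Bargmann--Fock limit, extensivity of the variance in the window, lower-order boundary contributions, fourth-moment criterion for the dominant chaoses). On this point your write-up is more careful than the paper's proof, which silently elides the orthant-versus-global distinction. One quantitative consequence is worth making explicit: since the limiting zero process is stationary, the variance is extensive in the window, so $\mathrm{Var}(\mathcal{N}_{d,n})\approx 2^{-(n-1)}\mathrm{Var}(\mathscr{N}_{d-1,n-1})$ rather than $4^{-(n-1)}\mathrm{Var}(\mathscr{N}_{d-1,n-1})$; hence the constant $V_\infty^2$ in \eqref{eq: variance} is $2^{n-1}$ times the constant of \cite{armentano2018asymptotic}, not that constant itself, as the paper's literal reading of Lemma~\ref{lem: connection} would suggest.

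One discrepancy you should not paper over: your centering and scaling produce the statistic $\big(2^{n-1}\mathcal{N}_{d,n}-(d-1)^{(n-1)/2}\big)/(d-1)^{(n-1)/4}$, which is not \eqref{eq: CLT}. With the factor $4^{n-1}$ as printed, the expression in \eqref{eq: CLT} has mean of order $(2^{n-1}-1)(d-1)^{(n-1)/4}\to\infty$ for $n\geq 2$, so it cannot converge to a normal random variable; the factor multiplying $\mathcal{N}_{d,n}$ should be $2^{n-1}$. This is a mis-normalization in the theorem statement itself, and your argument (like the paper's intended one) proves the corrected version; you should say so explicitly rather than claim your statistic coincides with \eqref{eq: CLT} verbatim.
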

\begin{proof}
The asymptotic of the variance and the central limit theorem of $\mathcal{N}$ follow directly from Lemma \ref{lem: connection} and \cite{armentano2018asymptotic} and \cite{armentano2021central}, respectively (see also \cite{dalmao2015asymptotic}). We refer to Section \ref{SI: variance} in the SI for further information, in particular for the explicit formula of $V_\infty$.
\end{proof}

\subsection{The distribution of the number of internal equilibria for $d$-player two-strategy games}
We provide an analytical formula for the probability that a  $d$-player two-strategy asymmetric evolutionary game has a certain number of internal equilibria. We use the following notations for the elementary symmetric polynomials
\label{sec: distribution}
\def\bs{\boldsymbol{\sigma}}
\begin{align*}
\sigma_0(y_1,\ldots,y_n)&=1,\nonumber
\\\sigma_1(y_1,\ldots,y_n)&=y_1+\ldots+y_n,\nonumber
\\\sigma_2(y_1,\ldots,y_n)&=y_1y_2+\ldots+y_{n-1}y_n
\\&\vdots\nonumber
\\\sigma_{n-1}(y_1,\ldots,y_n)&=y_1y_2\ldots y_{n-1}+\ldots+y_2y_3\ldots y_n,\nonumber
\\ \sigma_{n}(y_1,\ldots,y_n)&=y_1\ldots y_n;\nonumber
\end{align*}
and denote
\begin{equation*}
\Delta(y_1,\ldots,y_n)=\prod_{1\leq i<j\leq n}|y_i-y_j|
\end{equation*}
the Vandermonde determinant.
The main result of this section is the following theorem
\begin{theorem}
\label{thm: pm}
Suppose that the random variables $b_0,b_1,\ldots, b_{d-1}$ defined in \eqref{eq: bk} have a joint density $p(a_0,\ldots,a_{d-1})$. Then the probability that a $d$-player two-strategy asymmetric random evolutionary game has $m$ ($0\leq m\leq d-1$) internal equilibria is
\begin{equation*}
p_{m}=\sum_{k=0}^{\lfloor \frac{d-1-m}{2}\rfloor}p_{m,2k,d-1-m-2k},
\end{equation*}
where $p_{m,2k,d-1-m-2k}$ is given by
 \begin{align*}
p_{m,2k,d-1-m-2k}&=\frac{2^{k}}{m! k! (d-1-m-2k)!}\int_{\R_+^m}\int_{\R_-^{d-1-2k-m}} 
\int_{\R_+^k}\int_{[0,\pi]^k}\int_{\R}
\\& \qquad \, r_1\ldots r_k\, p(a\sigma_0,\ldots,a\sigma_{d-1}) |a|^{d-1}\Delta\, da\,d\alpha_1\ldots d\alpha_k dr_1\ldots dr_k dx_1\ldots dx_{d-1-2k}.
\end{align*}
When $\{\beta_{i_1,\ldots,i_{d-1}}\}$ are iid normal Gaussian random variables, $p_{m,2k,d-1-m-2k}$ can be expressed as
\begin{multline*}
\label{eq: pm2kG}
p_{m,2k,d-1-m-2k}=\frac{2^{k}}{m! k! (d-1-m-2k)!}~\frac{ \Gamma\Big(\frac{d}{2}\Big) }{(\pi)^{\frac{d}{2}}\prod\limits_{i=0}^{d-1}\delta_i^\frac{1}{2}}\int_{\R_+^m}\int_{\R_-^{d-1-2k-m}} 
\int_{\R_+^k}\int_{[0,\pi]^k}\, r_1\ldots r_k\\  \left(\sum\limits_{i=0}^{d-1}\frac{\sigma_i^2}{\delta_i}\right)^{-\frac{d}{2}}\Delta\,\, d\alpha_1\ldots d\alpha_k dr_1\ldots dr_k dx_1\ldots dx_{d-1-2k}.
\end{multline*}
In the above formula, $\delta_i=\begin{pmatrix}
d-1\\i
\end{pmatrix}$ and $\sigma_i$, for $i=0,\ldots,d-1$, and $\Delta$ are given by
\begin{align*}
&\sigma_j=\sigma_j(x_1,\ldots,x_{n-2k}, r_1e^{i\alpha_1}, r_1e^{-i\alpha_1},\ldots,r_k e^{i\alpha_k}, r_k e^{-i \alpha_k}),
\\&\Delta=\Delta(x_1,\ldots,x_{n-2k}, r_1e^{i\alpha_1}, r_1e^{-i\alpha_1},\ldots,r_k e^{i\alpha_k}, r_k e^{-i \alpha_k}).\end{align*}
In particular, the probability that a $d$-player two-strategy random evolutionary game has the maximal number of internal equilibria is:
\begin{equation*}
p_{d-1}=\frac{1}{(d-1)!}~\frac{\Gamma\Big(\frac{d}{2}\Big) }{(\pi)^\frac{d}{2} \prod\limits_{i=0}^{d-1}\delta_i^\frac{1}{2}}~\int_{\R_+^{d-1}} \left(\sum\limits_{i=0}^{d-1}\frac{\sigma_i^2(x_1,\ldots, x_{d-1})}{\delta_i}\right)^{-\frac{d}{2}}\Delta(x_1,\ldots, x_{d-1})\,dx_1\ldots dx_{d-1}.
\end{equation*}
\end{theorem}
\begin{proof}
The proof of this Theorem is presented in Section \ref{SI: distrution} of the Supporting Information.
\end{proof}
In Figure \ref{fig:anal_vs_simulations_pm} we compute the probability of having a certain number of internal equilibria for some small games using the analytical formulae given in Theorem \ref{thm: pm} and compare it with results from extensive numerical simulation by sampling the payoff matrix entries. The comparison shows a close correspondence between the theoretical and numerical results. 
\begin{figure}
    \centering
    \includegraphics[width=\linewidth]{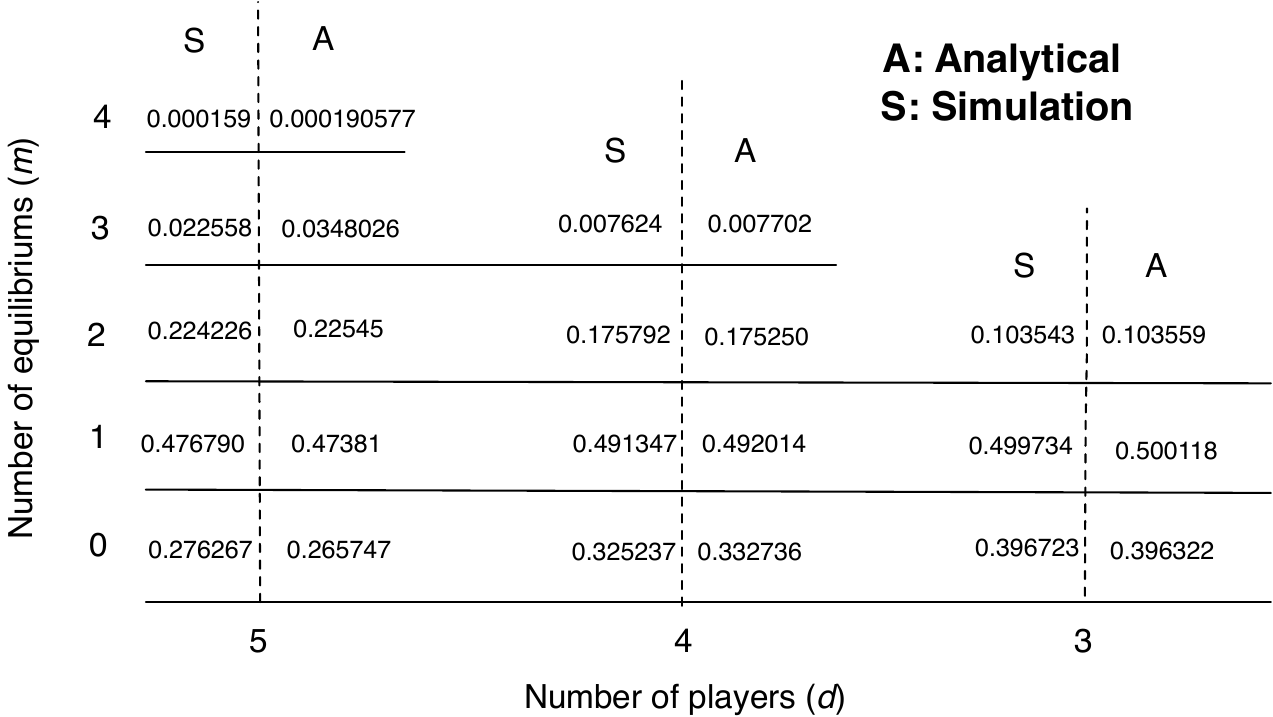}
    \caption{\textbf{Numerical calculations versus simulations  of the probability of having a concrete number ($m$) of internal equilibria, $p_m$, for different values of $d$}. Analytical results are obtained from analytical formulas in Theorem \ref{thm: pm}. Simulation results are obtained based on sampling $10^6$ payoff matrices. Analytical and simulations results are closely in accordance with each other. All results are obtained using Mathematica.} 
    \label{fig:anal_vs_simulations_pm}
\end{figure}

\begin{figure}
    \centering
    \includegraphics[width=\linewidth]{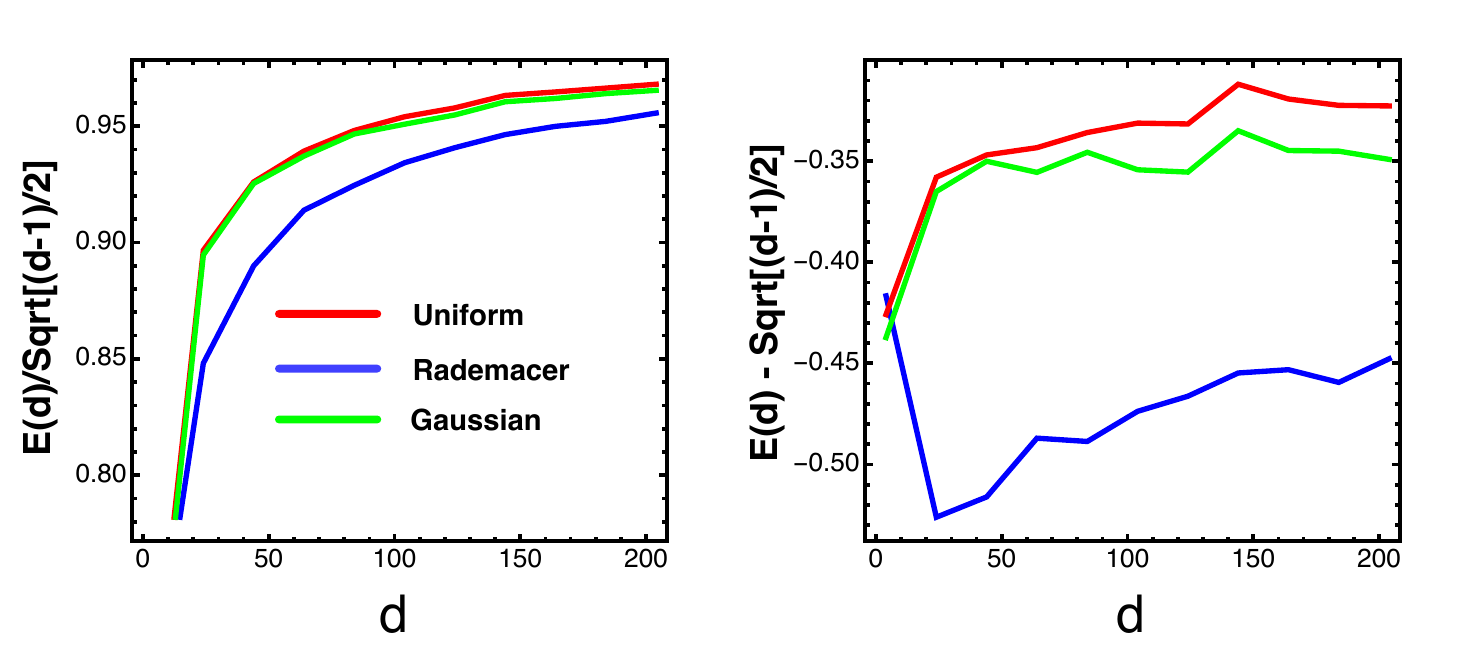}
    \caption{\textbf{Universality phenomena.  Simulation results for varying  $d$ for different distributions (Gaussian, Uniform, Rademacer) corroborate analytical results}. All simulation results are obtained based on sampling $10^6$ payoff matrices from the corresponding distributions.  All results are obtained using Mathematica.} 
    \label{fig: universal}
\end{figure}

\subsection{Universality phenomena}
In Sections \ref{sec: expected} and \ref{sec: variance}, we assume that the random coefficients $\beta_i$ are standard normal distributions. Direct applications of recent results in random polynomial theory allow us to remove this assumption, obtaining universality phenomena that characterize the asymptotic behaviour of the expected value and variance of the number of internal equilibria for $d$-player two-strategy games for a large class of general distributions. We recall from Section \ref{sec: 2strategy} that finding an internal equilibrium for a $d$-player two-strategy random asymmetric evolutionary  game amounts to finding a positive root of the random polynomial \eqref{eq: eqn for y} with coefficients $b_k$ determined from the payoff entries via \eqref{eq: bk}. From this formula, suppose that $\beta_{i_1,\ldots, i_{d-1}}$ are iid random variable, then $b_k$ is again a centered random variable with variance $\begin{pmatrix}
    d-1\\k
\end{pmatrix}$. Thus, we can write $b_k$ as 
\begin{equation}
\label{eq: bk2}
b_k=\sqrt{\begin{pmatrix}
d-1\\k
\end{pmatrix}}\, \xi_k,    
\end{equation}
where $\xi_k$ is a centered random variable with variance $1$.
\begin{theorem}[Universality for the expected number of internal equilibria]
Suppose that the random variables $\{\xi_k\}$ are independent with mean $0$, variance $1$ and finite $(2+\varepsilon)$-moment for some $\varepsilon>0$. Then
\begin{equation*}
\mathbb{E}(\mathcal{N}_{d,2})=\frac{\sqrt{d-1}}{2}+O((d-1)^{1/2-c}),
\end{equation*}
for some $c>0$ depending only on $\varepsilon$.
\end{theorem}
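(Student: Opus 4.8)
The plan is to transfer the statement to a count of real zeros of the univariate Kostlan--Shub--Smale polynomial and then compare the non-Gaussian model with the Gaussian one by a Lindeberg-type replacement. First I would apply Lemma~\ref{lem: connection} with $n=2$. Writing $N=d-1$, the quantity $\mathcal N_{d,2}$ is the number of positive roots of $P_d(y)=\sum_{k=0}^{N}b_k y^k$ with $b_k=\sqrt{\binom{N}{k}}\,\xi_k$, and since the law of $(b_0,\dots,b_N)$ is invariant under $y\mapsto-y$ (which only flips the signs $b_k\mapsto(-1)^kb_k$), the expected numbers of positive and of negative roots coincide. Hence $\mathbb E(\mathcal N_{d,2})=\tfrac12\,\mathbb E(\mathscr N_{N,1})$, where $\mathscr N_{N,1}$ is the total number of real zeros of $P_d$, and it suffices to show $\mathbb E(\mathscr N_{N,1})=\sqrt N+O(N^{1/2-c})$ when the $\xi_k$ are merely centred, unit-variance and $(2+\varepsilon)$-integrable.

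Second, I would record the Gaussian baseline. If the $\xi_k$ are standard Gaussian, $P_d$ is a centred Gaussian field with covariance $K(s,t)=\mathbb E[P_d(s)P_d(t)]=\sum_{k=0}^{N}\binom{N}{k}(st)^k=(1+st)^N$. The Kac--Rice formula then gives the exact real-zero density
\begin{equation*}
\rho_N(t)=\frac1\pi\sqrt{\partial_s\partial_t\log K(s,t)\big|_{s=t}}=\frac{\sqrt N}{\pi(1+t^2)},
\end{equation*}
whose integral over $\R$ is exactly $\sqrt N$, recovering Theorem~\ref{thm: expected number} at $n=2$. This scaled-Cauchy profile also exhibits the reciprocal symmetry $t^{N}P_d(1/t)\overset{d}{=}P_d(t)$ (a consequence of $\binom{N}{k}=\binom{N}{N-k}$), so the expected number of zeros on $\{|t|\le1\}$ equals that on $\{|t|\ge1\}$ and it is enough to control roots in the fixed window $[-1,1]$ and transport the rest by this symmetry.

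Third, for general coefficients I would run a replacement argument, swapping each $\xi_k$ for an independent Gaussian $\eta_k$ one index at a time and bounding the induced change in the expected root count. Concretely, I would represent the number of real zeros in a subinterval by a smoothed functional of $(P_d,P_d')$ (a regularised Kac--Rice integrand), Taylor-expand to second order in the single swapped coordinate, and estimate the remainder using the $(2+\varepsilon)$-moment together with anti-concentration (small-ball) bounds for $P_d$ and $P_d'$ ensuring that the two-point correlation functions are not too singular. Summing the per-swap errors over the $N+1$ coefficients and over the window produces a total error $O(N^{1/2-c})$ with $c=c(\varepsilon)>0$; this is precisely the output of the general universality theorems for real roots \cite{TaoVu14,DoNguyenVu2015,DoNguyenVu2018}, which I would invoke directly for the variance profile $c_k^2=\binom{N}{k}$.

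The main obstacle will be the third step, namely the uniform anti-concentration input needed to bound the Lindeberg remainder: near the transition $|t|\approx1$ the derivative $P_d'$ can be atypically small, making the regularised Kac--Rice integrand delicate, and it is exactly here that the finite $(2+\varepsilon)$-moment hypothesis is consumed and the admissible exponent $c$ is fixed. Away from this region the Cauchy-type concentration of zeros keeps the number of roots in any window tight, so the comparison there is routine.
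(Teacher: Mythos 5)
Your overall route coincides with the paper's: reduce $\mathcal{N}_{d,2}$ via Lemma~\ref{lem: connection} to a real-root count for the univariate Kostlan--Shub--Smale polynomial with variance profile $\binom{N}{k}$, $N=d-1$, and then invoke the universality theorems for elliptic random polynomials \cite{TaoVu14,DoNguyenVu2015,NguyenVu2022}; the paper's proof is literally this citation, and your Edelman--Kostlan computation $\rho_N(t)=\sqrt{N}/\bigl(\pi(1+t^2)\bigr)$ and the Lindeberg-replacement sketch are faithful descriptions of what lies inside the cited results rather than new ingredients.

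There is, however, one step that is wrong as stated. You claim that for general centred, unit-variance coefficients the law of $(b_0,\dots,b_N)$ is invariant under $b_k\mapsto(-1)^k b_k$, hence that expected positive and negative root counts coincide exactly, giving $\mathbb{E}(\mathcal{N}_{d,2})=\tfrac12\,\mathbb{E}(\mathscr{N}_{N,1})$. For independent coordinates, invariance under this sign flip forces every odd-indexed $b_k$ to have a \emph{symmetric} distribution, and symmetry is not implied by the theorem's hypotheses: mean $0$, variance $1$ and finite $(2+\varepsilon)$-moment admit skewed laws (e.g.\ a centred, normalised exponential), for which $(-1)^k b_k$ and $b_k$ differ in law and the exact halving identity fails at finite $N$. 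The repair is cheap and is how the citation should be read anyway: the Tao--Vu result is a \emph{local} universality statement, controlling the expected number of real roots in an arbitrary prescribed region up to $O(N^{1/2-c})$, so apply it directly to the region $(0,+\infty)$ and compare with the Gaussian ensemble, where the expected number of positive roots equals $\sqrt{N}/2$ exactly (there the sign-flip symmetry is legitimate). By contrast, your reciprocal symmetry $t^N P_d(1/t)\overset{d}{=}P_d(t)$ is harmless, since reversing the index order of iid coefficients with the palindromic profile $\binom{N}{k}=\binom{N}{N-k}$ does preserve the joint law. (The paper's own Lemma~\ref{lem: connection} is phrased with the same looseness, but since your write-up makes the symmetry argument explicit, it needs this correction.)
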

\begin{proof}
This is a direct consequence of Lemma \ref{lem: connection} and \cite{TaoVu14} (see also \cite{bleher2004correlations,flasche2020real,NguyenVu2022}, in particular \cite{NguyenVu2022} for a stronger statement where the assumptions on the random variables $\{\xi_k\}$ are relaxed.  
\end{proof}
\section{Symmetric vs asymmetric evolutionary games}
\label{sec: sym vs asym}
In previous works \cite{DH15,DuongHanJMB2016,DuongTranHanJMB,can2022expected}, we studied the statistics of the number of internal equilibria for $d$-player two-strategy random \textit{symmetric} evolutionary games, in which the payoff of a player in a group interaction is independent of the ordering of its members. In this symmetric case, instead of \eqref{eq: eqn for y} with coefficients given by \eqref{eq: bk2}, we obtain a different class of random polynomial 
$$
P^{\rm{sym}}(y)=\sum_{k=0}^{d-1}\begin{pmatrix}
    d-1\\k
\end{pmatrix} \xi_k y^k.
$$
Let $\mathcal{N}^{\rm{sym}}_{d,2}$ be the number of internal equilibria for $d$-player two-strategy symmetric games when the coefficient $\xi_k$ are iid standard Gaussian random variables. Then \cite{can2022expected} establishes a lower bound for the expected value of $\mathcal{N}^{\rm{sym}}_{d,2}$ for all $d$,
\begin{equation}
\label{eq: lower bound symmetric}
\mathbb{E}(\mathcal{N}^{\rm{sym}}_{d,2})\geq \frac{\sqrt{d-1}}{2} \quad \text{for all}\quad d>1,    
\end{equation}
and its asymptotic behaviour as $d\rightarrow+\infty$ 
\begin{equation}
\label{eq: leading order symm}
\mathbb{E}(\mathcal{N}^{\rm{sym}}_{d,2})=\sqrt{\frac{d-1}{2}}(1+o(1))\quad \text{as}\quad d\rightarrow +\infty.    
\end{equation}
The lower bound \eqref{eq: lower bound symmetric} is \textit{precisely} the expected number of internal equilibria for $d$-player two-strategy asymmetric games obtained in \eqref{eq: expected number}. This has an interesting biological interpretation: symmetry increases the expected number of internal equilibria (and hence, the biological or behavioural diversity).

The lower-bound \eqref{eq: lower bound symmetric} is actually true for a more general class of symmetric random polynomials. In fact, consider a general random polynomial of the form
$$
p_n(x)=\sum_{i=0}^n a_i \xi_i x^i,
$$
where the coefficients $\{a_i\}_{i=0}^n$ are symmetric, that is $a_i=a_{n-i}$ for $i=0,\ldots, n$; and $\{\xi_i\}$ are standard iid random variables. Let
\[
M_n(x)=\mathrm{var}(p_n(x))=\sum_{i=0}^n a_i^2 x^{2i}.
\]
Let $\mathbb{E}_n$ be the expected number of positive roots of $p_n$. Then we have the following theorem (see its proof in Section \ref{SI: proof general result} in  Supporting Information).
\begin{theorem}
\label{thm: general thm}
Suppose that the polynomial $\sum_{i=0}^n a_i^2 x^{i}$ has $n$ real roots. Then
\begin{equation*}
\mathbb{E}_n\geq \frac{\sqrt{n}}{2},
\end{equation*}
where the equality holds when $a_i=\sqrt{\begin{pmatrix}
    n\\i
\end{pmatrix}}$ for $i=0,\ldots, n$.
\end{theorem}
This theorem reveals an intriguing link between the expected number of real roots of a random polynomial to the real-rootedness of the associated deterministic variance polynomial. The real-rootedness of a deterministic polynomial is an active research topic with a long history. In Section \ref{SI: proof general result} in  Supporting Information, we summarize relevant results in the literature that provide necessary and sufficient conditions for a polynomial to have all real roots, including a characterization via Pólya frequency sequences (the Aissen-Schoenberg-Whitney Theorem \cite{Aissen1951}) and connections to the Lorentzian polynomials recently developed by Huh (a Fields medalist in 2022) and co-authors. We also refer the reader to for instance \cite{branden2015unimodality} for further information. As a direct consequence, Theorem \ref{thm: general thm} implies that $d$-player two strategy asymmetric games have the least expected number of equilibria among all games in which the associated polynomials satisfy the stated assumption.

In Figure \ref{fig: universal}, we numerically  compute the asymptotic behaviour of $\mathbb{E}(\mathcal{N}^{\rm{sym}}_{d,2})$ for three most popular classes of distribution
\begin{enumerate}[(i)]
\item ${\xi}_i$ are iid standard Gaussian distributions,
    \item ${\xi}_i$ are iid Rademacher distributions (i.e., receiving discrete values either $+1$ or $-1$ with equal probability $1/2$),
    \item $\{\xi\}_i$ are uniformly distributed on $[-1,1]$.
\end{enumerate}
We observe that, as $d\rightarrow +\infty$, the leading order of $\mathbb{E}(\mathcal{N}^{\rm{sym}}_{d,2})$ is the same, which is $\sqrt{\frac{d-1}{2}}$ as in \eqref{eq: leading order symm}, in all cases; while the next order term is uniformly bounded but with different bounds for different distributions. This is similar to the elliptic random polynomials arising from asymmetric games \cite{DoNguyenVu2015}. We conjecture that universality phenomenon and central limit theorem also hold true for symmetric games. 
\section{Conclusion and discussion}
In summary, we have established  an appealing connection between evolutionary game theory and random polynomial theory: the class of random polynomials arising from the study of equilibria of random asymmetric  evolutionary games is exactly the celebrated Kostlan-Shub-Smale system of random polynomials. The connection has enabled us to immediately obtain the statistics of the number of internal equilibria of random asymmetric  evolutionary games. As a consequence of our analysis, we have also proved that symmetry increases biological diversity. Furthermore, we have also numerically observed universality properties for the number of internal equilibria of symmetric random evolutionary games. Rigorously proving the universality phenomenon and a central limit theorem for symmetric games is a challenging open problem for future work. Our work also opens the door for further discoveries on the links between the two well-established theories, of evolutionary game theory and  of random polynomials,  for more complicated dynamics, such as the replicator-mutator equation.

\label{sec: summary}
\section*{Acknowledgements}
T.A.H. is supported by EPSRC (grant EP/Y00857X/1) and the Future of Life Institute. M.H.D is supported by EPSRC (grant EP/Y008561/1) and a Royal International Exchange Grant IES-R3-223047. 
\newcommand{\etalchar}[1]{$^{#1}$}

\newpage
\setcounter{section}{0} 
\renewcommand*{\thesection}{\Alph{section}}
\section*{Supporting Information}
In this Supporting Information (SI) we review relevant results on the theory of random polynomials, as well as present technical results and detailed calculations that appear in the main text.
\section{Random polynomials and system of random polynomials} 
\label{SI: general random polynomials}
The most general form of a uni-variate random polynomial of degree $N$ is given by
\begin{equation}
\Pp_{n,\xi}(x)=\sum_{i=0}^{n} \xi_i\,x^i,
\end{equation}
where $\xi_i$ are random variables. The most three well-known classes of polynomials are \cite{BS86,TaoVu14}
\begin{enumerate}[(i)]
\item Kac polynomials: $\mathrm{Var}(\xi_i)=1$,
\item Weyl (or flat) polynomials: $\mathrm{Var}(\xi_i)=\Big(\frac{1}{i!}\Big)^2$,
\item Elliptic (or binomial) polynomials: $\mathrm{Var}(\xi_i)=\begin{pmatrix}
N\\
i
\end{pmatrix}$.
\end{enumerate}
Kostlan-Shub-Smale \cite{Shub1993, Kostlan1993, EK95} polynomials are extensions of elliptic polynomials to the multivariate case. They consist of systems $\P=(P_1,\ldots,P_m)$ of $m$ polynomials in $m$ variables with common degree $d>1$ 
\[
P_\ell(\x)=\sum\limits_{|\j|\leq d} a_{\j}^{(\ell)} \x^\j
\]
where
\begin{enumerate}
\item $\j=(j_1,\ldots,j_m)\in \Nb^m$ and $|\j|=\sum_{k=1}^m j_k$;
\item $\x=(x_1,\ldots,x_m)$ and $\x^{\j}=\prod_{k=1}^m x_k^{j_k}$;
\item $a_{\j}^{(\ell)}=a^{(\ell)}_{j_1\ldots j_m}\in \R$, $\ell=1,\ldots, m$, $|\j|\leq d$; and finally
\item $\Var(a_{\j}^{(\ell)})=\begin{pmatrix}
d\\ \j
\end{pmatrix}=\frac{d!}{j_1!\ldots j_m! (d-|\j|)!}$.
\end{enumerate}
\section{Kac-Rice formula for the expected number of real roots}
\label{SI: Kac-Rice formula}
In this section, we discuss the Kac-Rice formula for computing the expected number of real roots of a random polynomial.

Let $p_{t,x,y}$ be the joint probability density function of $P_{n,\xi}(t)$ and its derivative with respect to $t$, $P'_{n,\xi}(t)$. The celebrated Kac-Rice formula for the expected number of real roots of $P_{n,\xi}$ in the interval $(a,b)$ is given by \cite{Kac1943}:
\begin{equation}
\label{eq: Kac formula}
\mathbb{E}(N_{n,\xi}(a,b))=\int_a^b f_{n,\xi}(t)\,dt,
\end{equation}
where the density function $f_{n,\xi}(t)$ is given by
$$
f_{n,\xi}(t)=\int_{-\infty}^\infty |y|p(t,0,y)\,dy.
$$

In the Gaussian case (that is, when the random variables $\{\xi_i\}_i$ are Gaussians), the density function can be computed explicitly as, see e.g. \cite[Section 2]{can2022expected}
\begin{equation}
\label{eq: Kac formula 2}
f_{n,\xi}(t)=\frac{1}{\pi}\frac{\sqrt{A_n(t)M_n(t)-B^2_n(t)}}{M_n(t)}=\frac{1}{\pi}\sqrt{\frac{1}{4t}\Big(t\frac{M_n'(t)}{M_n(t)}\Big)'},
\end{equation}
or equivalently in a logarithmic derivative form 
\begin{equation}
\label{eq: Kac formula 3}
f_{n,\xi}(t)=\frac{1}{\pi}\left[\frac{\partial^2}{\partial x\partial y}\left(\log v(x)^T C v(y)\right)\Big|_{y=x=t}\right]^\frac{1}{2},
\end{equation}
where in \eqref{eq: Kac formula 2} 
$$
A_n(t)=\mathrm{var}(P'_{n,\xi}(t)),\quad B_n(t)=\mathrm{cov}(P_{n,\xi}(t)P'_{n,\xi}(t)), \quad M_n(t)=\mathrm{var}(P_{n,\xi}(t)),
$$
and in \eqref{eq: Kac formula 3} 
\begin{equation*}
v(x)=\begin{pmatrix}
1\\
x\\
\vdots\\
x^{d-1}
\end{pmatrix}.
\end{equation*}
For Kostlan-Shub-Smale polynomials, the density function can be found explicitly, see for instance \cite{EK95}
\[
f_{n,\xi}(t)=\pi^{-\frac{m+1}{2}}\Gamma\Big(\frac{m+1}{2}\Big)\frac{(d-1)^\frac{m}{2}}{(1+\|t\|^2)^\frac{m+1}{2}},
\]
from which, the expected number of real roots follows directly from the Kac-Rice formula
\[
\mathbb{E}=d^{m/2}.
\]
\section{The distribution of the number of roots}
\label{SI: distrution}
In this section, we present an analytical formula for finding the distribution of the number of roots of a random polynomial and apply it to the random polynomial arising from symmetric random games (Theorem \ref{thm: pm}). To this end, we use the following notations for the elementary symmetric polynomials
\begin{align}
\sigma_0(y_1,\ldots,y_n)&=1,\nonumber
\\\sigma_1(y_1,\ldots,y_n)&=y_1+\ldots+y_n,\nonumber
\\\sigma_2(y_1,\ldots,y_n)&=y_1y_2+\ldots+y_{n-1}y_n\label{eq: sym pols}
\\&\vdots\nonumber
\\\sigma_{n-1}(y_1,\ldots,y_n)&=y_1y_2\ldots y_{n-1}+\ldots+y_2y_3\ldots y_n,\nonumber
\\ \sigma_{n}(y_1,\ldots,y_n)&=y_1\ldots y_n;\nonumber
\end{align}
and denote
\begin{equation}
\label{eq: Delta}
\Delta(y_1,\ldots,y_n)=\prod_{1\leq i<j\leq n}|y_i-y_j|.
\end{equation}
the Vandermonde determinant. The following theorem provides an analytical formula for the probability $p_{m,2k,n-m-2k}$ that $\Pp$ has $m$ positive, $2k$ complex and $n-m-2k$ negative zeros.
\begin{theorem}(\cite[Theorem 5.1]{DuongTranHanJMB}
\label{theo: Zap06}
Assume that the random variables $\xi_0,\xi_1,\ldots, \xi_n$ have a joint density $p(a_0,\ldots,a_n)$. Let $0\leq m\leq d-1$ and $0\leq k\leq \lfloor \frac{n-m}{2}\rfloor$. The probability $p_{m,2k,n-m-2k}$ that $\Pp$ has $m$ positive, $2k$ complex and $n-m-2k$ negative zeros is given by
\begin{multline}
\label{eq: pm2k}
p_{m,2k,n-m-2k}=\frac{2^{k}}{m! k! (n-m-2k)!}\int_{\R_+^m}\int_{\R_-^{n-m-2k}} 
\int_{\R_+^k}\int_{[0,\pi]^k}\int_{\R}
\\ r_1\ldots r_k p(a\sigma_0,\ldots,a\sigma_{n}) |a^{n}\Delta|\, da\,d\alpha_1\ldots d\alpha_k dr_1\ldots dr_k dx_1\ldots dx_{n-2k},
\end{multline}
where 
\begin{align}
\label{eq:sigma}
&\sigma_j=\sigma_j(x_1,\ldots,x_{n-2k}, r_1e^{i\alpha_1}, r_1e^{-i\alpha_1},\ldots,r_k e^{i\alpha_k}, r_k e^{-i \alpha_k}),
\\&\Delta=\Delta(x_1,\ldots,x_{n-2k}, r_1e^{i\alpha_1}, r_1e^{-i\alpha_1},\ldots,r_k e^{i\alpha_k}, r_k e^{-i \alpha_k}).\label{eq:Delta}
\end{align}
As consequences,
\begin{enumerate}[(1)]
\item The probability that $\Pp$ has $m$ positive zeros is
\begin{equation}
p_{m}=\sum_{k=0}^{\lfloor \frac{n-m}{2}\rfloor}p_{m,2k,n-m-2k}.
\end{equation}
\item In particular, the probability that $\Pp$ has the maximal number of positive zeros is
\begin{equation}
p_{n}=\frac{2^{k}}{k! (n-2k)!}\int_{\R_+^{n}}\int_{\R}p(a\sigma_0,\ldots,a\sigma_{n})\, |a^{n}\,\Delta|\, dadx_1\ldots dx_{n},
\end{equation}
where
\begin{equation*}
\sigma_j=\sigma_j(x_1,\ldots,x_{n}),\quad\Delta=\Delta(x_1,\ldots,x_{n}).
\end{equation*}
\end{enumerate}
\end{theorem}
We now apply this theorem to the random polynomial \eqref{eq: eqn for y} to obtain an explicit formula for the probability $p_m$ that a $d$-player two-strategy symmetric random evolutionary game has $m$ ($0\leq m\leq d-1$) internal equilibria. Due to the special property of \eqref{eq: eqn for y}, the formula \eqref{eq: pm2k} will be simplified. 

\def\bs{\boldsymbol{\sigma}}

The following theorem is Theorem \ref{thm: pm} in the main text. 
\begin{theorem}
\label{thm: distribution}
The probability that a $d$-player two-strategy random evolutionary game has $m$ ($0\leq m\leq d-1$) internal equilibria is
\begin{equation}
p_{m}=\sum_{k=0}^{\lfloor \frac{d-1-m}{2}\rfloor}p_{m,2k,d-1-m-2k},
\end{equation}
where $p_{m,2k,d-1-m-2k}$ is given by
\begin{multline}
\label{eq: pm2kG}
p_{m,2k,d-1-m-2k}\\=\frac{2^{k}}{m! k! (d-1-m-2k)!}~\frac{ \Gamma\Big(\frac{d}{2}\Big) }{(\pi)^{\frac{d}{2}}\prod\limits_{i=0}^{d-1}\delta_i^{1/2}}\int_{\R_+^m}\int_{\R_-^{d-1-2k-m}} 
\int_{\R_+^k}\int_{[0,\pi]^k}\, r_1\ldots r_k\, \left(\sum\limits_{i=0}^{d-1}\frac{\sigma_i^2}{\delta_i}\right)^{-\frac{d}{2}}\Delta\\d\alpha_1\ldots d\alpha_k dr_1\ldots dr_k dx_1\ldots dx_{d-1-2k}
\end{multline}
where $\sigma_i$, for $i=0,\ldots,d-1$, and $\Delta$ are given in \eqref{eq:sigma}--\eqref{eq:Delta} and
$\delta_i=\begin{pmatrix}
d-1\\i
\end{pmatrix}$.

In particular, the probability that a $d$-player two-strategy random evolutionary game has the maximal number of internal equilibria is
\begin{equation*}
p_{d-1}=\frac{1}{(d-1)!}~\frac{\Gamma\Big(\frac{d}{2}\Big) }{(\pi)^\frac{d}{2} \prod\limits_{i=0}^{d-1}\delta_i^{1/2}}~\int_{\R_+^{d-1}}\left(\sum\limits_{i=0}^{d-1}\frac{\sigma_i^2}{\delta_i}\right)^{-\frac{d}{2}}\Delta \,dx_1\ldots dx_{d-1},
\end{equation*}
Note that in formulas above, $\sigma_j=\sigma_j(x_1,\ldots,x_{d-1}),\quad \Delta=\Delta(x_1,\ldots,x_{d-1})$.
\end{theorem}
\begin{proof}[Proof of Theorem \ref{thm: distribution}]
\def\y{\mathbf{y}}
The proof of this theorem follows the same lines as that of \cite[Theorem 5.2]{DuongTranHanJMB}.

Since $\{\beta_{i_1,\ldots,i_{d-1}}\leq j\leq d-1\}$ are i.i.d. standard normally distributed, the coefficients $\beta_k$ in \eqref{eq: eqn for y} are independent Gaussians with mean zero and variance $ \begin{pmatrix}
d-1\\k
\end{pmatrix}$. Therefore, their joint distribution $p(y_0,\ldots,y_{d-1})$ is given by
\[
p(y_0,\ldots,y_{d-1})=\frac{1}{(2\pi)^{\frac{d}{2}} \prod_{i=0}^{d-1}\begin{pmatrix}
d-1\\i
\end{pmatrix}^\frac{1}{2}}\exp\left[-\frac{1}{2}\sum_{i=0}^{d-1}\frac{y_i^2}{\begin{pmatrix}
d-1\\i
\end{pmatrix}}\right]=\frac{1}{(2\pi)^{\frac{d}{2}}|\C|^\frac{1}{2}}\exp\Big[-\frac{1}{2}\y^T\C^{-1}\y\Big],
\]
where $\mathbf{y}=[y_0~~y_1~~\ldots~~y_{d-1}]^T$ and $\C$ is the covariance matrix given by $\C=\rm{diag}\Big(\begin{pmatrix}
d-1\\i
\end{pmatrix}\Big)_{i=0}^{d-1}$.
Therefore,
\begin{equation}
\label{eq: formula of p}
p(a\sigma_0,\ldots, a \sigma_{d-1})=\frac{1}{(2\pi)^\frac{d}{2} |\C|^\frac{1}{2}}\exp\Bigg(-\frac{a^2}{2}\bs^T\,\C^{-1}\,\bs\Bigg)\quad\text{where}\quad
\bs=[\sigma_0~\sigma_1~\ldots~\sigma_{d-1}]^T.
\end{equation}
Using the following formula for moments of a normal distribution, 
\begin{equation*}
\int_{\R}|x|^n\exp\big(-\alpha x^2\big)\,dx=\frac{\Gamma\big(\frac{n+1}{2}\big)}{\alpha^\frac{n+1}{2}},
\end{equation*}
we compute
\begin{equation*}
\int_{\R}|a|^{d-1}\exp\Bigg(-\frac{a^2}{2}\bs^T\,\C^{-1}\,\bs\Bigg)\,da=\frac{\Gamma\Big(\frac{d}{2}\Big)}{\Big(\frac{\bs^T\C^{-1}\bs}{2}\Big)^{\frac{d}{2}}}=\frac{2^\frac{d}{2}\Gamma\Big(\frac{d}{2}\Big)}{\big(\bs^T\C^{-1}\bs\big)^{\frac{d}{2}}}.
\end{equation*}
Applying Theorem \ref{theo: Zap06} to the polynomial $P$ given in \eqref{eq: eqn for y} and using the above identity we obtain
\begin{align*}
p_{m,2k,d-1-m-2k}&=\frac{2^{k}}{m! k! (d-1-m-2k)!}\int_{\R_+^m}\int_{\R_-^{d-1-2k-m}} 
\int_{\R_+^k}\int_{[0,\pi]^k}\int_{\R}
\\& \qquad \, r_1\ldots r_k\, p(a\sigma_0,\ldots,a\sigma_{d-1}) |a|^{d-1}\Delta\, da\,d\alpha_1\ldots d\alpha_k dr_1\ldots dr_k dx_1\ldots dx_{d-1-2k}
\\&=\frac{2^{k}}{m! k! (d-1-m-2k)!}~\frac{1}{(2\pi)^\frac{d}{2} |\C|^\frac{1}{2}}~ 2^{\frac{d}{2}}\Gamma\Big(\frac{d}{2}\Big) ~\int_{\R_+^m}\int_{\R_-^{d-1-2k-m}} 
\int_{\R_+^k}\int_{[0,\pi]^k}
\\& \qquad \, r_1\ldots r_k\, \big(\bs^T\C^{-1}\bs\big)^{-\frac{d}{2}}~\Delta\,d\alpha_1\ldots d\alpha_k dr_1\ldots dr_k dx_1\ldots dx_{d-1-2k}
\\&=\frac{2^{k}}{m! k! (d-1-m-2k)!}~\frac{\Gamma\Big(\frac{d}{2}\Big) }{(\pi)^\frac{d}{2} |\C|^\frac{1}{2}}~\int_{\R_+^m}\int_{\R_-^{d-1-2k-m}} 
\int_{\R_+^k}\int_{[0,\pi]^k}
\\& \qquad \, r_1\ldots r_k\, \big(\bs^T\C^{-1}\bs\big)^{-\frac{d}{2}}~\Delta\,d\alpha_1\ldots d\alpha_k dr_1\ldots dr_k dx_1\ldots dx_{d-1-2k},
\end{align*}
which is the desired equality \eqref{eq: pm2kG} by definition of $\C$.
\end{proof}

\section{The variance of the number of real roots}
\label{SI: variance}
In this section, we summarise the results of \cite{armentano2018asymptotic,Dalmao2015} on the asymptotic behaviour of the variance of the number of real roots, $\N$, of the Kostlan-Sub-Smale system of $m$ random polynomials in $m$ variables with common degree $\mathbf{d}$ (See Section \ref{sec: KSS}). To this end, we need the following notations: for $k=1,\ldots,m$ let $\xi_k,\eta_k$ be independent standard normal random vectors on $\R^k$. Let us define
\begin{align*}
&\sigma^2(t) = 1 - \frac {t^2 e^{-t^2} } {1 - e^{-t^2}};\quad \tau(t) = e^{-t^2}\Big( 1 - \frac { t^2 } {1 - e^{-t^2} } \Big);\quad \rho(t) = \frac{\tau(t)}{\sigma^2(t)};
\\&  m_{k,j}=\mathbb{E}(\|\xi_k\|^j)=2^{j/2}\frac{\Gamma((j+k)/2)}{\Gamma(k/2)},~~\text{where $\|\cdot\|$ is the Euclidean norm on $\R^k$};
\\& \text{for}~ k=1,\ldots, m-1, ~~M_k(t)=\mathbb{E}\Big[\|\xi_k\|\|\eta_k+\frac{e^{-t^2/2}}{(1-e^{-t^2})^{1/2}}\xi_k\|\Big];
\\& \text{for}~k=m,~~ M_{m}(t)=\mathbb{E}\Big[\|\xi_{m}\|\|\eta_{m}+\frac{\tau(t)}{(\sigma^4(t)-\tau^2(t))^{1/2}}\xi_{m}\|\Big].
\end{align*}
\begin{theorem}[Asymptotic behaviour of the variance \cite{armentano2018asymptotic,Dalmao2015}]
It holds that
\begin{equation}
\lim_{\mathbf{d}\to\infty}\frac{\rm{Var}(\N)}{\mathbf{d}^\frac{m}{2}}=V_\infty^2,
\end{equation}
where the limiting scaled variance $V_\infty$ is given explicitly by 
\begin{equation}
\label{eq: Vinfinity}
V_\infty^2=\frac{1}{2}+\frac{\kappa_{m}\kappa_{m-1}}{2(2\pi)^{m}}\int_0^\infty t^{m-1}\Big[\frac{\sigma^4(t)(1-\rho^2(t))}{1-e^{-t^2}}\Big]^{1/2}\Big[\prod_{k=1}^{m}M_k(t)-\prod_{k=1}^{m}m_{k,1}^2\Big]\,dt,
\end{equation}
where $\kappa_m$ is the $m$-volume of the sphere $S^{m}$. Furthermore, in the case $m=1$, $\N$ satisfies a central limit theorem
\begin{equation}
\frac{\N-\E\N}{(\rm{Var}(\N))^\frac{1}{2}}\to \mathcal{N}(0,1),
\end{equation}
where $\mathcal{N}(0,1)$ is the standard normal distribution.
\end{theorem}
We also refer the reader to \cite{Azais2005, Wschebor2005} for results about the asymptotic behaviours when $m\to \infty$.
\section{Universality phenomena for the expected number of roots of random polynomials}
\label{SI: universality}
In this section, we recall the theorem of \cite{TaoVu14} on the universality of the expected number of real roots of the elliptic random polynomial.
\begin{theorem}\cite{TaoVu14}
\label{thm: universality random polynomials}
Consider elliptic random polynomials
$$
\Pp_{n,\xi}(x)=\sum_{i=0}^{n}\sqrt{\begin{pmatrix}
n\\i
\end{pmatrix}} \xi_i\,x^i,
$$
Suppose that the random variables $\{\xi_i\}_i$ are independent with mean $0$, variance $1$ and finite $(2+\varepsilon)$-moments. Then
\begin{equation}
\label{eq: universality}
\mathbb{E}(N_{\Pp}(\mathbb{R}))=\sqrt{n}+ O(n^{1/2-c}),
\end{equation}
for some $c$ depending only on $\varepsilon$.
\end{theorem}
In a more recent paper \cite{NguyenVu2022}, the authors extend, among other things, the universality result above to a more flexible condition which allows a constant number of $\xi_i$ to have non-zero means. 
\section{Polynomials with all real roots}
We consider the following polynomial with real coefficients
\[
P(x)=\sum_{i=0}^n a_i x^i.
\]
In this section, we briefly summarise relevant works in the literature that provide necessary and sufficient conditions the polynomial $P$ to have all real roots. This real-rootedness condition appears in Theorem \ref{thm: general thm} in the main text. In particular, this section presents many important polynomials that satisfy the condition of Theorem \ref{thm: general thm}. We refer the reader to the paper \cite{branden2015unimodality} for a great exposition of this interesting topics. 
\subsection{Log-concave, Pólya frequency sequence}
We say that a sequence $\{a_n\}$  is log-concave if
\[
a_n^2\geq  a_{n-1}a_{n+1}.
\]
for all $n$. More generally, given $r\in \mathbb{R}$, we say that a sequence $\{a_n\}$  is $r$-factor log-concave if \cite{Mcnamara2010}
\[
a_n^2\geq r a_{n-1}a_{n+1}.
\]
for all $n$. The log-concavity properties are intimately related to the roots of polynomials. 
The following necessary condition for $P$ to have all real roots is dated back to Newton, see \cite{Kurtz1992}
\begin{theorem}If all the roots of $P$ are real, then
\begin{equation}
\label{eq: logconcave of coeffs}
a_i^2\geq \frac{n-1+1}{n-1}\frac{i+1}{i} a_{i-1}a_{i+1}, \quad i=1,\ldots, n-1.     
\end{equation}
If the roots of $P$ are not all equal, these inequalities are strict.
\end{theorem}
Note that condition \eqref{eq: logconcave of coeffs} is equivalent to the condition that the sequence $\left\{a_i/\begin{pmatrix}
    n\\i
\end{pmatrix}\right\}_{i=0}^n$ is log-concave. The following theorem provides a sufficient condition for $P$ to have all real roots.
\begin{theorem}\cite{Kurtz1992}
\label{thm: Kurtz}
Let $P$ be a polynomial of degree $n\geq 2$ with positive coefficients. If the sequence $\{a_i\}$ is 4-factor log-concave, that is
\begin{equation}
    a_i^2-4a_{i-1}a_{i+1}>0, \quad i=1,\ldots n-1
\end{equation}
then all the roots of $P$ are real and distinct. Furthermore, that 4-factor log-concavity cannot be replaced by $(4-\varepsilon)$-factor log-concavity for any $\varepsilon>0$.
\end{theorem}
The following theorem provides a necessary and sufficient condition for $P$ to have real roots. Let $P^{(j)}$ be the $j$-th derivative of $P$.
\begin{theorem}\cite{Chamberland2020}
   The zeros of $P$ are real and distinct if and only if
  \begin{equation}
     (P^{(j)}(x))^2>P^{(j-1)}(x)P^{(j+1)}(x)  
  \end{equation} 
  for all $x\in \mathbb{R}$, $j=1,\ldots, n-1$.
\end{theorem}
However, the condition is practically hard to verify. Aissen-Schoenberg-Whitney Theorem \cite{Aissen1951} offers an alternative characterization of a polynomial with real roots. To state this theorem, we recall the concept of a Pólya frequency sequence. A sequence of real numbers $(a_k)_{k=0}^\infty$ is called a \textit{Pólya frequency (or PF) sequence} if the infinite matrix $(a_{j-i})_{i,j=0}^\infty$ is totally positive, (i.e., all its minors have a nonnegative determinant) where we adopt the convention that $a_k=0$ for $k<0$. A finite sequence $(a_0,\ldots, a_n)$ is called a PF sequence if the infinite sequence $(a_0,\ldots, a_n, 0,\ldots)$ is a PF.

PF sequences are characterized by the following theorem \cite{Edrei1953}.
\begin{theorem}
    A sequence $\{a_k\}_{k=0}^\infty\subset \mathbb{R}$ of real numbers is PF if and only if its generating function may be expressed as
    \[
    \sum_{k=0}^\infty a_k x^k=C x^m e^{ax} \prod_{k=0}^\infty (1+ \alpha_k x)\Big/\prod_{k=0}^\infty (1- \beta_k x),
    \]
    where $C,a\geq 0$, $m\in \mathbb{N}$, $\alpha_k, \beta_k\geq 0$ for all $k\in \mathbb{N}$, and $\sum_{k=0}^\infty (\alpha_k+\beta_k)<\infty$.
\end{theorem}
We also refer the reader to \cite{Wang2005} for examples of linear transformations that preserve PF property.



The connection between finite PF sequences and the zeros of the corresponding polynomials is given by the following fundamental Aissen-Schoenberg-Whitney theorem \cite{Aissen1951}.
\begin{theorem}[PF characterization]
Let $a_0,\ldots, a_n\geq 0$. Then
\[
(a_0,\ldots, a_n)\quad\text{is a PF sequence}\quad \longleftrightarrow \sum_{i=0}^n a_n x^n\quad\text{has only real zeros}.
\]
\end{theorem}

\subsection{Operations that preserve real-rootednesss}
In this section we discuss operations on polynomials that preserve the real-rootedness property. Obviously the following operations preserve the real-rootedness of a polynomial.
\begin{enumerate}
    \item Differentiation: If $p(x)$ is real-rooted, so is $p'(x)$ (by Rolle’s theorem).
    \item Reciprocation: If $p(x)$ is real-rooted, then so is the reciprocal polynomial $r(x) = x^n p(1/x)$.
\end{enumerate}
The following results provide more operations.
\begin{proposition}\cite{Driver2007}, \cite{branden2011iterated}
\begin{enumerate}
    \item if $\sum_{k=0}^n a_kx^k$ has only real zeros, then $\sum_{k=0}^n \frac{a_k}{k!}x^k$ has also only real zeros.
    \item if $\sum_{k=0}^n a_kx^k$ and $\sum_{k=0}^n b_kx^k$ have only real zeros. Then $\sum_{k=0}^n i! a_i b_i x^i$ has only real zeros. 
    \item suppose that the polynomial $\sum_{k=0}^n a_k x^k$ has only real and negative zeros. Then so does the polynomial
    \[
    \sum_{k=0}^n (a_k^2-a_{k-1}a_{k+1}) x^k
    \]
    where $a_{-1}=a_{n+1}=0$.
\end{enumerate}   
\end{proposition} 
Furthermore, the following polynomials have only real (negative) roots \cite{Driver2007}
\begin{enumerate}
    \item $\sum_{k=0}^n\begin{pmatrix}
        n\\k
    \end{pmatrix}^2 \begin{pmatrix}
        2k\\k
    \end{pmatrix} x^k$.
    \item $
    \sum_{k=0}^n \begin{pmatrix}
        n\\k
    \end{pmatrix}\frac{(a_1)_k(a_2)_k\ldots (a_p)_k}{(b_1)_k(b_2)_k\ldots(b_q)_k}x^k
    $
    where $a_i, b_j>0$ and $(\alpha)_k=\alpha(\alpha+1)\ldots(\alpha+k-1)$ denotes the is the Pochhammer symbol
    \item Narayana polynomials \cite{branden2006linear}
    \[
    \sum_{k=0}^n \Big(\begin{pmatrix}
        n\\k
    \end{pmatrix}^2-\begin{pmatrix}
        n\\k-1
    \end{pmatrix}\begin{pmatrix}
        n\\k+1
    \end{pmatrix}\Big) x^k
    \]
    \end{enumerate}
Note that binomial coefficients are log-concave. In fact, we have 
\[
\begin{pmatrix}
    n\\i
\end{pmatrix}^2- \begin{pmatrix}
    n\\i-1
\end{pmatrix}\begin{pmatrix}
    n\\i+1
\end{pmatrix}=\begin{pmatrix}
    n\\i-1
\end{pmatrix}\begin{pmatrix}
    n\\i+1
\end{pmatrix}\frac{n+1}{i(n-i)}\geq 0.
\]
\subsection{Stable, Lorentzian polynomials}
Recent breakthroughs on Lorentzian polynomials developed by Brändén and Huh offer deeper connections between stability and real-rootedness properties of a polynomial. We follow the their seminal paper \cite{branden2020lorentzian}.

We recall that a polynomial $p$ in $\mathbb{R}[x_1,\ldots,x_n]$ is stable if $p$ is non-vanishing on $\mathcal{H}^n$ or identically zero, where $\mathcal{H}$ is the open upper half plane in $\mathbb{C}$. Let $S_n^d$ be the set of degree $d$ homogeneous stable polynomials in $n$ variables with non-negative coefficients.

When $p\in S_n^d$, the stability of $p$ is equivalent to any one of the following statements on univariate polynomials in the variable $x$
\begin{enumerate}[-]
    \item For any $u\in \mathbb{R^n}_{>0}$, $p(xu-v)$ has only real zeros for all $v\in \mathbb{R}^n$.
    \item For some $u\in \mathbb{R^n}_{>0}$, $p(xu-v)$ has only real zeros for all $v\in \mathbb{R}^n$.
    \item For any $u\in \mathbb{R^n}_{\geq 0}$ with $p(u)>0$, $p(xu-v)$ has only real zeros for all $v\in \mathbb{R}^n$.
    \item For some $u\in \mathbb{R^n}_{\geq 0}$ with $p(u)>0$, $p(xu-v)$ has only real zeros for all $v\in \mathbb{R}^n$.
\end{enumerate}
In the above statements, we want the univariate polynomial $Q(x):=p(xu-v)$ to have only real roots. The following provide example such a polynomial~\cite{branden2020lorentzian}[Example 2.3]: consider the homogeneous bivariate polynomial with positive coefficients
\[
p(x,y)=\sum_{k=0}^n a_k x^k y^{n-k}.
\]
Then $p$ is stable if and only if the univariate polynomial
\[
Q(x)=p(x,1)=\sum_{k=0}^n a_k x^k
\]
has only (non-positive) real roots.

The paper \cite{borcea2010multivariate} provide a general characterization of a real stable polynomial with two-variables. Let $p[x,y]$  be of degree n(not necessary homogeneous). Then $p$ is real stable if and only if there exist two $n\times n$ positive semi-definite matrices $A, B$ and a symmetric $n\times n$ matrix $C$ such that
\[
p(x,y)=\pm\det(xA+yB+C).
\]
Finally, according to \cite{branden2020lorentzian}, any polynomial in $S^d_n$ is Lorentzian.
\section{Proof of Theorem \ref{thm: general thm}}
\label{SI: proof general result}
\begin{proof}[Proof of Theorem \ref{thm: general thm}]
The proof of this theorem can be obtained by directly adapted the proof of \cite[Theorem 1.1, part (1)]{can2022expected}. 

Let $r_1,\ldots, r_n$ be $n$ real roots of the polynomial $\sum_{i=0}^n a_i^2 x^{i}$. Obviously all of them are negative.  
We can write the polynomial $M_n$ as
\begin{equation}
\label{eq: representation of Mn}
M_n(t)=m_n \prod_{k=1}^n (t^2+r_k),
\end{equation}
where $m_n$ is the leading coefficient. Using the representation \eqref{eq: representation of Mn} of $M_n$ we have
\begin{align*}
M_n'(t)=2t m_n\sum_{k=1}^n\prod_{j\neq k} (t^2+r_j), \quad \frac{M_n'(t)}{M_n(t)}=\sum_{k=1}^n\frac{2t}{t^2+r_k},\quad 
\Big(t\frac{M_n'(t)}{M_n(t)}\Big)'=\sum_{k=1}^n\frac{4 t r_k}{(t^2+r_k)^2}.
\end{align*}
Hence, according to \eqref{eq: Kac formula 2}, the density function can be represented as
\begin{align}
\label{eq: f in terms of roots of Mn}
f_n(t)^2=\frac{1}{4t}\Big(t\frac{M_n'(t)}{M_n(t)}\Big)'=\sum_{k=1}^n\frac{r_k}{(t^2+r_k)^2}.
\end{align}
From  \eqref{eq: f in terms of roots of Mn}  and using Cauchy-Schwartz inequality we have
\begin{equation}
 \label{lowerbound}   
 \left(\sum_{k=1}^n\frac{\sqrt{r_k}}{t^2+r_k}\right)^2\leq n\sum_{k=1}^n\frac{r_k}{(t^2+r_k)^2}=n f_n(t)^2,
\end{equation}
In addition, if all $r_k=r$ are the same then the inequality becomes an equality,
\[
f_n(t)=\sqrt{n}\frac{\sqrt{r}}{t^2+r}
\]
From  \eqref{lowerbound} we deduce
$$
f_n(t)\geq \frac{1}{\sqrt{n}}\sum_{k=1}^n\frac{\sqrt{r_k}}{t^2+r_k}.
$$
Therefore,
\begin{equation}
\label{lower-bound-of-E}
\mathbb{E}(N_n)=\frac{1}{\pi}\int_{-\infty}^\infty f_n(t)\,dt\geq \frac{1}{\sqrt{n}}\sum_{k=1}^n\int_{-\infty}^\infty\frac{\sqrt{r_k}}{\pi(t^2+r_k)}\,dt=\sqrt{n}.   
\end{equation}
when all $r_k$ are equal then $\mathbb{E}(N_n)=\sqrt{n}$.
Therefore we have the following characterisation of a class of random polynomials satisfying this equality: suppose that
$$
a_i^2=\sigma^2 \begin{pmatrix}
    n\\i
\end{pmatrix} a^{n-i}
$$
for some $\sigma, a>0$, that is we consider the class of random polynomial of the form
\[
p_n(x)=\sum_{i=0}^n \sqrt{\begin{pmatrix}
    n\\i
\end{pmatrix}} a^{\frac{n-i}{2}}\xi_i
\]
where $\{\xi_i\}$ are independent $\mathcal{N}(0,\sigma).$ 
Then $\mathbb{E}(N_n)=\sqrt{n}$. In fact, in this case
$$
M_n(x)=\sigma^2\sum_{i=0}^n \begin{pmatrix}
    n\\i
\end{pmatrix} a^{n-i} x^i= \sigma^2 (x+a)^n.
$$
This has all real (negative) roots, which are all equal to $-a$. We can even take $a=a_n$ sine the last integral in \eqref{lower-bound-of-E} does not depend on $r_k$.
\end{proof}
\end{document}